\spnewtheorem{notation}{Notation}{\bfseries}{}
\newcommand{\N}{\mathbb{N}}
\newcommand{\Z}{\mathbb{Z}}
\newcommand{\set}[2]{\{ #1 \: | \: #2 \}}
\newcommand{\confspace}{\Sigma^{\Z^d}}
\begin{document}
\title{On the surjunctivity and the Garden of Eden theorem for non-uniform cellular automata}
\titlerunning{On surjunctivity and the GoE theorem for non-uniform CA}
%
\author{Katariina Paturi \orcidID{0009-0007-2475-1393} \and Jarkko Kari \orcidID{0000-0003-0670-6138} } 
%
\authorrunning{K. Paturi, J. Kari}
%
\institute{University of Turku, Turku, Finland} 
%
\maketitle              
\begin{abstract}
Non-uniform cellular automata (NUCA) are an extension of cellular automata with multiple local rules in different cells. We show that if the distribution of local rules is uniformly recurrent, or recurrent in the one-dimensional case, the Garden of Eden theorem holds. We also show that for any non-recurrent distribution, there is a substitution of local rules that defines a NUCA which does not satisfy the Garden of Eden theorem. Finally, we show that a rule distribution asymptotic to recurrent distribution defines a surjunctive NUCA.

\keywords{Non-uniform cellular automata  \and Garden of Eden theorem \and Surjunctivity.}
\end{abstract}

\section{Introduction}

The Garden of Eden theorem states that a cellular automaton, or CA for short, is surjective if and only if it is pre-injective. One direction of the Garden of Eden theorem for CA over integer grids was first proven by Moore in 1962 and the other direction by Myhill in 1963. It is known that there are groups that don't satisfy the Garden of Eden theorem, specifically, the theorem is satisfied in amenable groups \cite{cecc,moore,myhill}.

Surjunctivity is a broader notion that was introduced by Gottschalk in 1973. A group is said to be surjunctive if all CA over the group have the property that they are surjective if they are injective. It is known that all sofic groups are surjunctive, but it remains an open problem whether all groups are surjunctive \cite{gottschalk,cecc}.

Finally, non-uniform cellular automata, or NUCA, are a generalization of regular CA which operate with different local rules in different cells. The local rules are given by a special configuration called a local rule distribution. It can easily be seen that the Garden of Eden theorem does not hold in general for non-uniform CA over integer grids. More recently, conditions for the surjunctivity of non-uniform CA have been researched \cite{sipper,dennuz,phung}.

In this work we show that if a NUCA is defined by a uniformly recurrent local distribution, it satisfies the Garden of Eden theorem. More specifically in the 1-dimensional case, we show that the theorem is satisfied if the distribution is recurrent. Conversely, we show that for any one-dimensional non-recurrent configuration, there is an assignment of local rules to symbols such that the obtained rule distribution defines a NUCA which does not satisfy the Moore direction of the Garden of Eden theorem. We also prove the analogous result for the Myhill direction. Finally we show that all distributions asymptotic to recurrent distributions are surjunctive, that is, they define NUCA which are surjective if they are injective.

\section{General definitions}

We define some general notation regarding configurations.

\begin{definition}
Let $\Sigma$ be a finite set called a \emph{state set} and its elements \emph{states}. Let $d\in \N$. A \emph{configuration} is a function $c:\Z^d \rightarrow \Sigma$ and $d$ is the \emph{dimension} of the configuration. An element $\bar x \in \Z^d$ is called a \emph{cell} and $c(\bar x)$ is the \emph{state of cell $\bar x$}.
\end{definition}

\begin{definition}
Let $\Sigma$ be a state-set and $d \in \Z_+$. A finite set $D \subseteq \Z^d$ is called a \emph{finite domain}. A \emph{finite pattern} is a function $p:D\rightarrow \Sigma$.
\end{definition}

\begin{notation}
Let $c \in \Sigma^{\Z^d}$ and $D \subseteq \Z^d$. The  pattern $c_{|D}\in \Sigma^D$ is the unique pattern for which it holds that $c_{|D}(\bar x) = c(\bar x)$ for all $\bar x \in D$.
\end{notation}


\begin{definition} \label{def_copy}
Let $D \subseteq \Z^d$ be a finite domain, $p_1 \in \Sigma^D$ and $\bar r \in \Z^d$. Let $E = \set{\bar x + \bar r}{\bar x \in D}$
and $p_2 \in \Sigma^{E}$. The pattern $p_2$ is a \emph{translated copy of} $p_1$ if 
$p_1(\bar x) = p_2 (\bar x + \bar r)$
for all $\bar x \in D$.
\end{definition}

\begin{notation}
Let $x,y\in \Z$. We denote $[x,y] = \set{z \in \Z}{x \leq z \leq y}$.
\end{notation} 

Recurrence and uniform recurrence are general properties of a dynamical system. The definitions here are equivalent to the usual definitions for configurations under a shift dynamic.

\begin{definition}
A \emph{hypercube} $C \subseteq \Z^d$ is a finite domain such that for some  $w\ in \Z_+$ and cell $\bar{x} \in \Z^d$, 
\begin{align*}
C = \set{\bar x + (v_1,\ldots,v_d)}{\forall i\in [1,d]: 0 \leq v_i < w}.
\end{align*}
The constant $w$ is called the \emph{width} of $C$, and $C$ is called \emph{$w$-wide}.
\end{definition}

\begin{definition}
A configuration $c \in \confspace$ is \emph{recurrent} if for all finite domains $D \subseteq \Z^d$, there is  $E \subseteq \Z^d$ such that $D\neq E$ and $c_{|E}$ is a translated copy of $c_{|D}$.
In other words, a configuration is recurrent if every finite pattern that appears in the configuration, appears at least twice. 

The configuration $c$ is \emph{uniformly recurrent} if for every finite domain $D \subseteq \Z^d$, there exists $w \in \Z_+$ such that for any hypercube of width $w$, the hypercube contains a translated copy of $c_{|D}$. In other words, a configuration is uniformly recurrent if every pattern that appears in it, appears in every hypercube of width $w$.
\end{definition}

\section{Non-uniform cellular automata}

Non-uniform cellular automata are a generalization of cellular automata with the possibility of having multiple different local rules. 

\begin{definition}
Let $\Sigma$ be a state set, $d \in \Z_+$ a dimension and $N = (\bar n_1,\ldots,\bar n_m)$ a tuple of vectors $\bar n_i \in \Z^d$, $1\leq i \leq m$ called a \emph{neighbourhood}. A \emph{local rule} is a function $f:\Sigma^m \rightarrow \Sigma$.
\end{definition}

\begin{notation}Let $N = (\bar n_1,\bar n_2,\ldots,\bar n_m)$ be a neighbourhood, $\bar x \in \Z^d$ and $D \subseteq \Z^d$. The neighbourhood of a cell is denoted as
\begin{align*}
N(\bar x) = \set{\bar x + \bar n}{\bar n \in N}
\end{align*}
and the neighbourhood of $D$ is denoted as
\begin{align*}
N(D) = \set{\bar x + \bar n}{\bar x \in D, \bar n \in N}.
\end{align*}
\end{notation}

\begin{definition} \emph{\cite{dennuz2}}
Let $\mathcal{R}$ be a finite set of local rules with state set $\Sigma$ and neighbourhood $N$. A configuration $\theta \in \mathcal{R}^{\Z^d}$ is called a \emph{local rule distribution}. A \emph{non-uniform cellular automaton} or \emph{NUCA} for short is the tuple $A = (\Sigma, d, N, \mathcal{R}, \theta)$. 
\end{definition} 

\begin{definition}
Let $A = (\Sigma, d, N, \mathcal{R}, \theta)$ be a NUCA where $N =(\bar{n}_1, \ldots, \bar{n}_m)$. The \emph{global update rule} of $A$ is the function $H_\theta:\confspace \rightarrow \confspace$ that maps any configuration $c \in \confspace$ to the configuration $H_\theta (c)$ such that
\begin{align*}
H_\theta(c)(\bar{x}) = \theta(\bar{x})(\bar{x}+ \bar{n}_1, \ldots, \bar{x}+ \bar{n}_m)
\end{align*}
for all $\bar{x} \in \Z^d$.
\end{definition}

If the NUCA is clear from context, it is usually referred to by its global update rule $H_\theta$.

\begin{definition}
Let $A = (\Sigma, d, N, \mathcal{R}, \theta)$ be a NUCA where $N =(\bar{n}_1, \ldots, \bar{n}_m)$. Let $D\subset \Z^d$ be a finite domain. A \emph{update rule over domain $D$} is the function $H_{\theta|D}:\Sigma^{|N(D)|}\rightarrow \Sigma^D$ that maps any finite pattern $p \in \Sigma^{N(D)}$ to the pattern
\begin{align*}
H_{\theta|D}(p)(\bar{x}) = \theta(\bar{x})(p(\bar{x}+\bar{n}_1), \ldots p(\bar{x}+\bar{n}_m))
\end{align*}
for all $\bar{x} \in D$.
\end{definition}

\begin{lemma}  \label{cont}
\emph{\cite{dennuz}} Let $\theta \in \mathcal{R}^{\Z^d}$ be a rule distribution, $A = (\Sigma, d, N, \mathcal{R}, \theta)$ a NUCA. The global update rule $H_\theta:\confspace \rightarrow \confspace$ is continuous.
\end{lemma}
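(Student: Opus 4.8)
The plan is to put the standard prodiscrete topology on $\confspace$, for which a convenient basis is the collection of cylinder sets $[p] = \set{c \in \confspace}{c_{|D} = p}$, one for each finite domain $D \subseteq \Z^d$ and pattern $p \in \Sigma^D$ (equivalently, one may work with the metric $\rho(c,c') = 2^{-k}$, where $k$ is the least sup-norm $\|\bar x\|_\infty$ of a cell $\bar x$ with $c(\bar x) \neq c'(\bar x)$, and $\rho(c,c) = 0$). The single idea needed is that the update rule is \emph{local}: by definition
\begin{align*}
H_\theta(c)(\bar x) = \theta(\bar x)(c(\bar x + \bar n_1), \ldots, c(\bar x + \bar n_m)),
\end{align*}
so the value $H_\theta(c)(\bar x)$ depends only on $c_{|N(\bar x)}$, the restriction of $c$ to the finite set $N(\bar x)$. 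In particular, two configurations that agree on $N(\bar x)$ have images that agree at $\bar x$.

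From this, continuity follows by a routine computation, which I would carry out with cylinders as follows. Fix a finite domain $D$ and a pattern $q \in \Sigma^D$; I claim $H_\theta^{-1}([q])$ is open. A configuration $c$ belongs to it iff $\theta(\bar x)(c(\bar x + \bar n_1), \ldots, c(\bar x + \bar n_m)) = q(\bar x)$ for every $\bar x \in D$, and this only constrains the finitely many entries of $c$ indexed by $N(D)$. Hence $H_\theta^{-1}([q])$ is a finite union of cylinders over $N(D)$, so it is open, and since such cylinders form a basis, $H_\theta$ is continuous. Running the argument through the metric instead: given $\varepsilon > 0$ pick $n$ with $2^{-n} < \varepsilon$, set $r = \max_{1 \le i \le m}\|\bar n_i\|_\infty$ (finite since $N$ is a finite tuple), and choose $\delta$ small enough that $\rho(c,c') < \delta$ forces $c$ and $c'$ to agree on every cell of sup-norm at most $n + r$. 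For any $\bar x$ with $\|\bar x\|_\infty \le n$ we have $N(\bar x) \subseteq \set{\bar y}{\|\bar y\|_\infty \le n + r}$, so $c$ and $c'$ agree on $N(\bar x)$ and hence $H_\theta(c)(\bar x) = H_\theta(c')(\bar x)$; therefore $\rho(H_\theta(c), H_\theta(c')) < \varepsilon$.

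I do not anticipate any genuine obstacle: continuity of the global map is an entirely local matter, and the non-uniformity of $\theta$ changes nothing, since each cell $\bar x$ simply uses its own rule $\theta(\bar x) \in \mathcal{R}$ but all the rules share the one finite neighbourhood $N$. The only minor point is to fix a uniform bound $r$ on the neighbourhood vectors so that the dependence of $H_\theta(c)(\bar x)$ on $c$ is controlled uniformly over the cells of any given finite domain; this is immediate from the finiteness of $N$. In fact the proof is essentially definitional once one observes that $H_\theta$ is a map into a product each of whose coordinate maps factors as a projection onto finitely many coordinates followed by a map between discrete spaces.
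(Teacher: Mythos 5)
Your argument is correct: locality of $H_\theta$ (the value at $\bar x$ depends only on $c_{|N(\bar x)}$, with the single finite neighbourhood $N$ shared by all rules in $\mathcal{R}$) makes preimages of cylinders finite unions of cylinders, which is the standard proof of continuity in the prodiscrete topology, and your metric formulation is an equivalent restatement. Note that the paper itself gives no proof of Lemma~\ref{cont} — it is quoted from the cited reference — so there is nothing to diverge from; your proof is the expected one, and you even state the global rule correctly as $\theta(\bar x)\bigl(c(\bar x + \bar n_1), \ldots, c(\bar x + \bar n_m)\bigr)$, fixing a small notational slip in the paper's definition.
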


We introduce the concept of local rule templates to characterize rule distributions. A local rule distribution template is a configuration of symbols which function as templates for local rules. Each symbol can then be assigned a local rule, which yields a local rule distribution. This allows for making general statements about distributions which are obtained from different assignments to the same template.

\begin{definition} 
Let $\mathcal{R}$ be a finite set of local rules, $d\in \Z_+$ a dimension and $T$ a finite set whose elements are called \emph{rule templates}. A configuration $\tau \in T^{\Z^d}$ is called a \emph{local rule distribution template} and a function $\alpha:T\rightarrow \mathcal{R}$ is called an \emph{assignment of local rules}. The rule distribution $\tau_\alpha \in \mathcal{R}^{\Z^d}$ such that $\tau_\alpha (x) = \alpha (\tau (x))$ is called $\tau$ \emph{with assignment} $\alpha$.
\end{definition}

\section{The Garden of Eden theorem}

The Garden of Eden theorem, or GoE theorem for short, states that the global rule of a CA is surjective if and only if it is injective on finite configurations. This can be stated more generally by replacing the notion of injectivity on finite configurations with pre-injectivity.

\begin{definition}
Let $c,e \in \Sigma^{\Z^d}$. The \emph{difference set} of $c$ and $e$ is the set
\begin{align*}
\mathrm{diff}(c,e) = \set{\bar x\in \Z^d}{c(\bar x) \neq e(\bar x)}.
\end{align*}
If $\mathrm{diff}(c,e)$ is finite, $c$ and $e$ are called \emph{asymptotic}.
\end{definition}

\begin{definition}
Let $H_\theta$ be the update rule of a NUCA. $H_\theta$ is \emph{pre-injective} if for all asymptotic configurations $c,e \in \confspace$ such that $c \neq e$, it holds that $H_\theta (c) \neq H_\theta (e)$.
\end{definition}

The theorem is known to hold for regular CA \cite{moore,myhill}, but it is also easy to see that it does not hold for NUCA generally. 
In this section we show that the theorem holds for NUCA with uniformly recurrent distributions, and that in the 1-dimensional case, it holds if the distribution is recurrent. 
First we state an auxiliary lemma used in both proofs. It is a simple generalization of a lemma in \cite{dennuz2}.

\begin{lemma} \label{orphan}
Let $\theta \in \mathcal{R}^{\Z^d}$. The $d$-dimensional global update rule $H_\theta$ is surjective if and only if the partial update rule $H_{\theta|C}$ is surjective for all hypercubes $C\subseteq \Z^d$
\end{lemma}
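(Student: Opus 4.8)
The claim to prove is Lemma~\ref{orphan}: for $\theta \in \mathcal{R}^{\Z^d}$, the global update rule $H_\theta$ is surjective if and only if $H_{\theta|C}$ is surjective for every hypercube $C \subseteq \Z^d$. The plan is to prove the two directions separately, with a compactness argument carrying the weight of the harder direction.

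For the ``only if'' direction, I would argue by contraposition. Suppose $H_{\theta|C}$ is not surjective for some hypercube $C$, so there is a pattern $q \in \Sigma^C$ with no preimage under $H_{\theta|C}$ — an ``orphan''. Then any configuration $e$ with $e_{|C} = q$ can have no preimage under $H_\theta$: if $H_\theta(c) = e$, then restricting to $C$ and using the definition of the global and partial update rules, $H_{\theta|C}(c_{|N(C)}) = e_{|C} = q$, contradicting that $q$ is an orphan. Hence $H_\theta$ is not surjective. This direction is essentially bookkeeping about how $H_\theta$ restricts to $H_{\theta|C}$ on the neighbourhood $N(C)$.

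For the ``if'' direction — which is the main obstacle — suppose $H_{\theta|C}$ is surjective for every hypercube $C$, and let $e \in \confspace$ be arbitrary; I must produce $c$ with $H_\theta(c) = e$. Take an exhausting sequence of hypercubes $C_1 \subseteq C_2 \subseteq \cdots$ with $\bigcup_k C_k = \Z^d$. For each $k$, surjectivity of $H_{\theta|C_k}$ gives a pattern $p_k \in \Sigma^{N(C_k)}$ with $H_{\theta|C_k}(p_k) = e_{|C_k}$. The set $\Sigma^{\Z^d}$ is compact (product of finite discrete spaces), so the sequence of configurations extending the $p_k$ arbitrarily has a convergent subsequence; let $c$ be a limit point. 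The delicate point is to check that $H_\theta(c) = e$: for any fixed cell $\bar x$, pick $k$ large enough that $\bar x \in C_k$ and so that a subsequence element agrees with $c$ on the finite set $N(\{\bar x\})$; then $H_\theta(c)(\bar x) = \theta(\bar x)(c(\bar x + \bar n_1), \ldots, c(\bar x + \bar n_m))$ equals the corresponding value computed from $p_j$, which is $e(\bar x)$. One has to be slightly careful that the $p_k$ are only defined on $N(C_k)$, not all of $\Z^d$, but this is handled by extending each $p_k$ to a full configuration (the extension is irrelevant) before extracting the subsequence, or equivalently by a diagonal argument. A cleaner alternative is to invoke continuity of $H_\theta$ (Lemma~\ref{cont}) together with compactness: the image $H_\theta(\confspace)$ is closed, and it contains every configuration agreeing with $e$ on arbitrarily large hypercubes, hence contains $e$ itself.

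I expect the compactness/diagonalization step to be the only real content; everything else is unwinding definitions. A reasonable write-up would state the exhausting sequence explicitly, extend each partial preimage to a full configuration, extract a convergent subsequence by compactness of $\confspace$, and verify cellwise that the limit maps to $e$ under $H_\theta$.
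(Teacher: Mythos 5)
Your proposal is correct and follows essentially the same route as the paper: the easy direction by restricting a global preimage to $N(C)$ (the paper argues it directly rather than by contraposition, which is immaterial), and the hard direction by an exhausting sequence of hypercubes, extending partial preimages to full configurations, extracting a convergent subsequence by compactness of $\confspace$, and concluding via continuity of $H_\theta$ (Lemma~\ref{cont}). No substantive differences to report.
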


\begin{proof}
Let $N$ be the neighbourhood of $H_\theta$. First assume $H_\theta$ is surjective. Let $C \subseteq \Z^d$ be a hypercube and $p \in \Sigma^C$. Clearly there is a configuration $c \in \confspace$ such that $c_{|C} = p$. Because $H_\theta$ is surjective, there is a configuration $e \in \confspace$ such that $H_\theta (e) = c$. Then $H_{\theta |C} (e_{|N(C)} ) = c_{|C} = p$ and therefore $H_{\theta|C}$ is surjective.

Assume then that $H_{\theta|C}$ is surjective for all hypercubes $C \subseteq \Z^d$. Let $C_i$ be a width $i$ hypercube centered on the origin for all $i \in \Z_+$. Let $c \in \confspace$ and $(e_i)_{i=1}^\infty$ a sequence of configurations $e_i \in \confspace$ such that $H_{\theta|C_i}(e_{i|N(C_i )}) = c_{|C_i}$ for all $i \in \Z_+$. Because $H_{\theta|C_i}$ is surjective, such a sequence exists. 

Let $c_i = H_\theta (e_i)$. Clearly the sequence $(c_i)_{i=1}^\infty$ converges with limit $\lim_{i \rightarrow \infty} c_i =c$. By compactness of the Cantor space, the sequence $(e_i)_{i=1}^\infty$ has a converging subsequence $(e_{i_j})_{j=1}^\infty$. Let $e = \lim_{j \rightarrow \infty} e_{i_j}$. Then by Lemma \ref{cont}, 
\begin{align*}
c = \lim_{i\rightarrow \infty} c_i = \lim_{j \rightarrow \infty} c_{i_j} = \lim_{j \rightarrow \infty} H_\theta (e_{i_j}) = H_\theta (e).
\end{align*}
Therefore $H_\theta$ is surjective.
\qed
\end{proof} 

Additionally, we define the support of a configuration.

\begin{definition}
Let $q \in \Sigma$ be a state. The \emph{$q$-support} of a configuration $c \in  \confspace$ is the set
\begin{align*}
\mathrm{supp}_q(c) = \set{\bar{x} \in \Z^d}{c(\bar{x}) \neq q}.
\end{align*}
\end{definition}

\subsection{Uniformly recurrent distributions}

All NUCA with uniformly recurrent distributions satisfy the GoE theorem. The proof is a modification of the original proof for regular CA found in \cite{moore,myhill}.

\begin{lemma} \label{ineq1} 
\emph{\cite{moore}}
Let $d,s,n,r \in \Z_+$. For all sufficiently large $k \in \Z_+$, it holds that
\begin{align*}
(s^{n^d}-1)^{k^d} < s^{(kn-2r)^d}.
\end{align*}
\end{lemma}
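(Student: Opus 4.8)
The plan is to take logarithms and reduce the claim to a comparison of the growth rates of two polynomials in $k$. First I would dispose of the degenerate case $s = 1$ separately: then $s^{n^d} - 1 = 0$, so the left-hand side is $0$ (since $k^d \ge 1$) while the right-hand side is $1$, and there is nothing to prove. From then on I assume $s \ge 2$, so that $s^{n^d} - 1 \ge 1$ and $\log_s(s^{n^d} - 1)$ is well-defined and non-negative.

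Writing $\beta = \log_s(s^{n^d} - 1)$ and dividing the claimed inequality through by $\log s > 0$, it becomes equivalent to $\beta\, k^d < (kn - 2r)^d$. The decisive observation is that $\beta$ is \emph{strictly} smaller than $n^d$: setting $\delta = n^d - \beta = -\log_s(1 - s^{-n^d})$, this is a fixed positive constant depending only on $s, n, d$. So it suffices to prove $(n^d - \delta)\, k^d < (kn - 2r)^d$ for all large $k$.

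To control the right-hand side I would use the elementary bound $(a - b)^d \ge a^d - d\, a^{d-1} b$, valid whenever $0 \le b \le a$ — it holds because the difference vanishes at $b = 0$ and is non-decreasing in $b$ on $[0, a]$. Applying it with $a = kn$ and $b = 2r$, which is legitimate once $kn \ge 2r$, gives $(kn - 2r)^d \ge n^d k^d - 2rd\, n^{d-1} k^{d-1}$. Hence it is enough to have $(n^d - \delta)k^d < n^d k^d - 2rd\, n^{d-1} k^{d-1}$, i.e. $\delta\, k^{d} > 2rd\, n^{d-1} k^{d-1}$, i.e. $k > 2rd\, n^{d-1}/\delta$. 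Thus the lemma holds for every $k$ exceeding $\max\{\,2r/n,\ 2rd\, n^{d-1}/\delta\,\}$.

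The argument is short, and the only points requiring genuine care are the justification that $(a-b)^d \ge a^d - d\,a^{d-1}b$ is a true lower bound — the binomial expansion is alternating, so one cannot merely truncate it, and the monotonicity argument above is needed — and, relatedly, the strict positivity of $\delta$, which is exactly why the case $s = 1$ must be handled separately and why $n^d \ge 1$ is used. Everything else is the routine observation that a $k^d$ term dominates a $k^{d-1}$ term.
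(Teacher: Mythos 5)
Your proof is correct. Note that the paper does not prove this lemma at all --- it is imported verbatim from Moore's paper with a citation --- so there is no in-paper argument to compare against; what you give is a complete and self-contained proof of the standard asymptotic comparison. The individual steps all check: the case $s=1$ is rightly split off (and is exactly where strict positivity of $\delta$ would fail); for $s\ge 2$ both sides are positive, so taking $\log_s$ is a legitimate equivalence and reduces the claim to $\beta k^d < (kn-2r)^d$ with $\beta = \log_s(s^{n^d}-1)$; the constant $\delta = n^d-\beta = -\log_s(1-s^{-n^d})$ is indeed a strictly positive quantity depending only on $s,n,d$; and your lower bound $(a-b)^d \ge a^d - d\,a^{d-1}b$ for $0\le b\le a$ is correctly justified by monotonicity in $b$ (the derivative of the difference is $d\bigl(a^{d-1}-(a-b)^{d-1}\bigr)\ge 0$), rather than by an illegitimate truncation of the alternating binomial expansion. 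The resulting explicit threshold $k > \max\{2r/n,\ 2rd\,n^{d-1}/\delta\}$ is a small bonus over what the lemma asks for. One minor simplification worth knowing: once you have $\beta < n^d$, you can dispense with the polynomial estimate altogether by observing that $(kn-2r)^d/k^d = (n-2r/k)^d \to n^d$ as $k\to\infty$, hence eventually exceeds $\beta$; your route trades that limit argument for an effective bound on $k$, which is a reasonable trade.
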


\begin{lemma} \label{goe_unif1}
Let $\theta \in \mathcal{R}^{\Z^d}$ be a uniformly recurrent rule distribution. If $H_\theta$ is not surjective, then it is not pre-injective.
\end{lemma}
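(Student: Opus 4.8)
The plan is to mimic the classical Myhill argument (the implication ``non-surjective $\Rightarrow$ non-pre-injective''), using uniform recurrence of $\theta$ as a substitute for the translation invariance that ordinary CA enjoy. Assume $H_\theta$ is not surjective. By Lemma~\ref{orphan} there is a hypercube $C_0$, of width $n$ say, and an \emph{orphan} $\rho\in\Sigma^{C_0}$, i.e.\ a pattern not in the image of $H_{\theta|C_0}$. The crucial point is that orphans are transported by $\theta$-preserving translations: if $\bar r\in\Z^d$ is such that $\theta(\bar x+\bar r)=\theta(\bar x)$ for every $\bar x\in C_0$, then the translated copy of $\rho$ living on $C_0+\bar r$ is an orphan of $H_{\theta|C_0+\bar r}$. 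This follows straight from the definition of the partial update rule, because under that condition $H_{\theta|C_0+\bar r}$ is $H_{\theta|C_0}$ conjugated by the shift by $\bar r$, hence has image the $\bar r$-translate of the image of $H_{\theta|C_0}$. Since $\theta$ is uniformly recurrent, the definition applied to the finite domain $C_0$ supplies a width $w_0$ (necessarily $w_0\ge n$) such that every hypercube of width $w_0$ contains a translated copy of $\theta_{|C_0}$, hence a relocated orphan.

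Next I would bound the number of image patterns in a large hypercube. Fix $k\in\Z_+$, let $Q$ be a hypercube of width $w=kw_0$, and split $Q$ into $k^d$ hypercubes of width $w_0$. Inside the $i$-th block pick, as above, a hypercube $C_0+\bar r_i$ carrying a relocated orphan; these copies are pairwise disjoint. Writing $s=|\Sigma|$, every pattern of the form $H_\theta(c)_{|Q}$ must differ from the translated $\rho$ on each $C_0+\bar r_i$, so on the $i$-th block it can take at most $s^{w_0^d}-s^{w_0^d-n^d}\le s^{w_0^d}-1$ values. Multiplying over the $k^d$ blocks, $|\{\,H_\theta(c)_{|Q}:c\in\confspace\,\}|\le (s^{w_0^d}-1)^{k^d}$.

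Then I would finish by counting. Choose $r\in\Z_+$ with $\bar n_j\in[-r,r]^d$ for all $j$, and let $Q_-$ be the hypercube of width $w-2r$ concentric with $Q$. If two configurations agree on $\Z^d\setminus Q_-$, then the neighbourhood of any cell outside $Q$ misses $Q_-$, so their images agree on $\Z^d\setminus Q$; hence the images of the $s^{(w-2r)^d}$ configurations that coincide with a fixed configuration on $\Z^d\setminus Q_-$ are determined by their restrictions to $Q$ alone, and thus number at most $(s^{w_0^d}-1)^{k^d}$. By Lemma~\ref{ineq1}, applied with $w_0$ in the role of $n$, $(s^{w_0^d}-1)^{k^d}<s^{(kw_0-2r)^d}=s^{(w-2r)^d}$ once $k$ is large. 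For such $k$ the pigeonhole principle produces two distinct configurations that agree off $Q_-$ — in particular two distinct asymptotic configurations — with the same image, so $H_\theta$ is not pre-injective.

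The one real difficulty, relative to the CA case, is precisely the absence of translation invariance: a single orphan of a CA is automatically an orphan at every cell, whereas here uniform recurrence is exactly what lets me scatter enough well-separated orphan locations across $Q$. The points needing care are that matching $\theta$ merely on the domain $C_0$ (rather than on $N(C_0)$) already relocates the orphan, and that the per-block count $s^{w_0^d}-s^{w_0^d-n^d}$ — weaker than the single forbidden pattern available to a CA with $w_0=n$ — is still at most $s^{w_0^d}-1$, which is all Lemma~\ref{ineq1} requires.
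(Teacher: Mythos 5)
Your proposal is correct and follows essentially the same route as the paper's proof: Lemma~\ref{orphan} supplies an orphan on a hypercube, uniform recurrence plants a copy of it in every width-$w_0$ block of a large hypercube, and the Moore-style count via Lemma~\ref{ineq1} plus pigeonhole on configurations fixed outside the shrunken hypercube yields two asymptotic configurations with equal images. Your extra remarks (that matching $\theta$ only on $C_0$ already relocates the orphan, and the per-block bound $s^{w_0^d}-s^{w_0^d-n^d}\le s^{w_0^d}-1$) just make explicit steps the paper leaves implicit.
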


\begin{proof}
Let $r \in \Z_+$ be large enough that $H_\theta$ can be defined with radius-$r$ local rules. Let $s = |\Sigma|$. Suppose that $H_\theta$ is not surjective. Then by Lemma \ref{orphan} there's a domain $D \subseteq \Z^d$ such that $H_{\theta|D}$ is not surjective. Because $\theta$ is uniformly recurrent, there is $n \in \Z_+$ such that any $n$-wide hypercube in $\theta$ contains a translated copy of $\theta_{|D}$.

Let $q \in \Sigma$ and $k\in \Z_+$. Let $C\subseteq \Z^d$  be a hypercube of width $kn$ and $C'$ a hypercube of width $kn-2r$ centred on $C$. Let
\begin{align*}
K = \set{c \in \confspace}{\mathrm{supp}_q (c) \subseteq C'}.
\end{align*} 
Now $|K| = s^{|C'|} = s^{(kn-2r)^d}$.

Hypercube $C$ can be partitioned into $k^d$ hypercubes of width $n$, each of which must contain a copy of $\theta_{|D}$ in $\theta$. Consider then the set $H_\theta (K)$. Because $H_{\theta|D}$ is not surjective, there is a finite pattern $p\in \Sigma^D$ with no $H_{\theta|D}$ pre-image. Then for each of the $k^d$ hypercubes in $C$, there is at least one finite pattern with no pre-image. Because they are identical outside of $C$, there are at most $(s^{n^d}-1)^{k^d}$ configurations in $H_\theta (K)$. Now by Lemma \ref{ineq1},
\begin{align*}
|H_\theta(K)| \leq (s^{n^d}-1)^{k^d} < s^{(kn-2r)^d} = |K|
\end{align*}
for sufficiently large $k$. Therefore there must be configurations $c_1,c_2 \in K$ such that $c_1 \neq c_2$ and $H_\theta (c_1) = H_\theta(c_2)$. Hence $H_\theta$ is not pre-injective.
\qed
\end{proof}

\begin{definition}
Let $\bar r \in \Z^d$. The \emph{$\bar r$-shift} $\sigma_{\bar r}: \confspace \rightarrow \confspace$ is the function mapping $c\in \confspace$ such that for all $\bar x \in \Z^d$, 
\begin{align*}
\sigma_{\bar r} (c)(\bar x) = c(\bar x - \bar r).
\end{align*} 
\end{definition}

\begin{lemma} \label{goe_unif2}
Let $\theta \in \mathcal{R}^{\Z^d}$ be a uniformly recurrent rule distribution. If $H_\theta$ is not pre-injective, then it is not surjective.
\end{lemma}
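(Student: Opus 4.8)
The plan is to run the counting of Lemma~\ref{goe_unif1} with the two inequalities of the Garden of Eden argument interchanged: rather than extracting a colliding pair from an orphan pattern, I would extract an orphan pattern from a colliding pair. Precisely, I would exhibit a hypercube $C'$ such that $H_{\theta|C'}$ is not surjective and then invoke Lemma~\ref{orphan}. To set up, use the hypothesis to fix asymptotic configurations $c_1\neq c_2$ with $H_\theta(c_1)=H_\theta(c_2)$; let $\delta=\mathrm{diff}(c_1,c_2)$, which is finite and nonempty, and choose a hypercube $D\supseteq\delta$. Let $r$ be a radius for which $H_\theta$ is given by radius-$r$ rules, put $s=|\Sigma|$, and apply uniform recurrence of $\theta$ to the domain $N(N(D))$ to obtain $n\in\Z_+$, which is necessarily at least the width of $N(N(D))$, such that every $n$-wide hypercube of $\theta$ contains a translated copy of $\theta_{|N(N(D))}$.

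The engine is a \emph{mutual erasing} fact, lifted to arbitrary locations by uniform recurrence. Suppose $\bar v\in\Z^d$ is such that $\theta_{|N(N(D))+\bar v}$ is a translated copy of $\theta_{|N(N(D))}$. Then, for any configuration $c$ that agrees with $\sigma_{\bar v}(c_1)$ on $(N(N(D))+\bar v)\setminus(\delta+\bar v)$ -- a region on which $\sigma_{\bar v}(c_1)$ and $\sigma_{\bar v}(c_2)$ coincide, since $c_1$ and $c_2$ agree off $\delta$ -- the two configurations obtained from $c$ by filling $\delta+\bar v$ with the $\sigma_{\bar v}(c_1)$-content, respectively with the $\sigma_{\bar v}(c_2)$-content, have the same image under $H_\theta$: at cells of $N(D)+\bar v$ both outputs equal $\sigma_{\bar v}(H_\theta(c_1))=\sigma_{\bar v}(H_\theta(c_2))$ by the translated-copy property of $\theta$, while no cell outside $N(D)+\bar v$ has $\delta+\bar v$ in its neighbourhood.

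Now fix a large $k$, let $C$ be a $kn$-wide hypercube split into $k^d$ sub-hypercubes of width $n$, and in the $j$-th piece pick $\bar v_j$ with $N(N(D))+\bar v_j$ inside that piece and $\theta_{|N(N(D))+\bar v_j}$ a translated copy of $\theta_{|N(N(D))}$; put $D_j=D+\bar v_j$ and $\delta_j=\delta+\bar v_j$, so that the sets $N(N(D_j))$ are pairwise disjoint. Define an idempotent map $\psi\colon\Sigma^C\to\Sigma^C$ acting on each block $j$ separately: it leaves a pattern unchanged unless the restriction to $N(N(D_j))$ equals $\sigma_{\bar v_j}(c_1)$ or $\sigma_{\bar v_j}(c_2)$ there, in which case it overwrites $\delta_j$ by the $\sigma_{\bar v_j}(c_1)$-content. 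Taking $C'$ to be the $(kn-2r)$-wide hypercube with $N(C')=C$, the erasing fact applied block by block (using disjointness so that rewrites at different blocks do not interfere) gives $H_{\theta|C'}(p)=H_{\theta|C'}(\psi(p))$ for all $p\in\Sigma^C$, so the image of $H_{\theta|C'}$ is contained in $H_{\theta|C'}(\psi(\Sigma^C))$. The image of $\psi$ consists exactly of the patterns whose restriction to each $N(N(D_j))$ is $\neq\sigma_{\bar v_j}(c_2)$ there, and by disjointness this set has $(s^M-1)^{k^d}\cdot s^{|C|-k^dM}$ elements, where $M$ is the number of cells of $N(N(D))$. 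Since $M\leq n^d$ we have $(s^M-1)s^{n^d-M}=s^{n^d}-s^{n^d-M}\leq s^{n^d}-1$, so this count is at most $(s^{n^d}-1)^{k^d}$, which by Lemma~\ref{ineq1} is strictly below $s^{(kn-2r)^d}=|\Sigma^{C'}|$ for all large $k$. Hence $H_{\theta|C'}$ is not surjective, and Lemma~\ref{orphan} shows that $H_\theta$ is not surjective.

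The step I expect to be the main obstacle is the careful verification of the erasing fact and the bookkeeping around it: pinning down exactly which cells' outputs may change when $\delta_j$ is rewritten, checking that the double neighbourhood $N(N(D_j))$ is simultaneously large enough to make a rewrite invisible and small enough to keep the blocks disjoint inside their width-$n$ pieces, and keeping the roles of $\sigma_{\bar v_j}(c_1)$ and $\sigma_{\bar v_j}(c_2)$ straight so that $\psi$ is genuinely idempotent with the stated image. Once that is in place the rest is routine, the final estimate being exactly Lemma~\ref{ineq1} after the elementary step $(s^M-1)s^{n^d-M}\leq s^{n^d}-1$.
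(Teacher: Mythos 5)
Your proposal is correct and follows essentially the same route as the paper's proof: pad the difference set by twice the radius, use uniform recurrence to plant a copy of the padded region (with its two mutually replaceable fillings) in each width-$n$ block of a width-$kn$ hypercube, and then compare $(s^{n^d}-1)^{k^d}$ with $s^{(kn-2r)^d}$ via Lemma~\ref{ineq1} to contradict surjectivity of $H_{\theta|C'}$. Your idempotent normalization map $\psi$ is just more explicit bookkeeping for the paper's step of ``fixing one of the two patterns to be $p_1$'', and it handles the surrounding-context condition for the swap somewhat more carefully than the paper's sketch.
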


\begin{proof}
Let $r\in 2\Z_+$ be large enough that $H_\theta$ can be defined with radius-$\frac{r}{2}$ local rules. Let $s = |\Sigma|$. Suppose that $H_\theta$ is not pre-injective. Let $c_1,c_2 \in \confspace$ be asymptotic configurations such that $c_1 \neq c_2$ and $H_\theta (c_1) = H_\theta (c_2)$.

Because $c_1$ and $c_2$ are asymptotic, there is a hypercube $D \subseteq \Z^d$ such that $\mathrm{diff}(c_1,c_2)$ is contained within a hypercube centered on $D$ whose width is at least $2r$ less than the width of $D$. Let $n\in \Z_+$ be large enough that every hypercube of width $n$ in $\theta$ must contain a translated copy of $\theta_{|D}$. 

Consider any hypercube $E \subseteq \Z^d$ of width $n$. There is $\bar{y} \in \Z^d$ such that $D' = \set{\bar{x} + \bar{y}}{\bar{x} \in D}  \subseteq D$ and $\theta_{|D'}$ is a copy of $\theta_{|D}$. Let $p_1 = \sigma_{\bar{y}} (c_1)_{|D}$ and $p_2 = \sigma_{\bar{y}} (c_2)_{|D}$. Let then $e_1,e_2 \in \confspace$ be configurations such that $e_{1|D} = p_1$ and $e_{2|D} = p_2$. Consider any cell $\bar{x} \in \Z^d$. If $N(\bar{x}) \cap \mathrm{diff}(e_1,e_2) = \emptyset$, then clearly $H_\theta (e_1) (\bar{x}) = H_\theta (e_2) (\bar{x})$. If $N(\bar{x}) \cap \mathrm{diff}(e_1,e_2) \neq \emptyset$ then $\bar{x}$ is within $\frac{r}{2}$ cells from a differing cell, and
\begin{align*}
H_\theta(e_1)(\bar{x}) = H_\theta (c_1)(\bar{x} - \bar{y}) = H_\theta (c_2)(\bar{x} - \bar{y}) = H_\theta(e_2)(\bar{x}).
\end{align*}
Therefore $H_\theta (e_1) = H_\theta (e_2)$, meaning for any hypercube of width $n$, there are two patterns $p_1$ and $p_2$ which can be replaced with each other without affecting the image.

Let $C$ be a hypercube of width $kn$ for some $k\in \Z_+$, and $C'$ a hypercube of width $kn-2r$ centred on $C$. If $H_\theta$ is surjective, then $H_{\theta|C'}$ is surjective, meaning every pattern in the domain $C'$ has a pre-image in the domain $C$. There are $s^{(kn-2r)^d}$ possible patterns in the domain $C'$. The hypercube $C$ can be partitioned into $k^d$ hypercubes of width $n$, each of which must contain two patterns $p_1$ and $p_2$ such that $p_1$ can be replaced by $p_2$ without affecting the image of $C$. If for each of these hypercubes we fix one of these two patterns to be $p_1$, then every pattern in the domain $C'$ has a pre-image with no $p_2$ pattern in any hypercube. There are at most $(s^{n^d}-1)^{k^d}$ such patterns. 

By Lemma \ref{ineq1}, for sufficiently large $k$, it holds that $(s^{n^d}-1)^{k^d}<s^{(kn-2r)^d}$ and therefore some pattern in the domain $C'$ has no pre-image. Therefore $H_{\theta|C'}$ is not surjective and hence $H_\theta$ is not surjective.
\qed
\end{proof}

\begin{theorem} \label{goe_unif}
Let $\theta \in \mathcal{R}^{\Z^d}$ be a uniformly recurrent rule distribution. The NUCA $H_\theta$ is surjective if and only if it is pre-injective.
\end{theorem}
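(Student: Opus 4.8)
The plan is to read the theorem off directly from Lemmas \ref{goe_unif1} and \ref{goe_unif2}, which together already supply both implications. Concretely, I would argue each direction by contraposition. For the implication ``$H_\theta$ pre-injective $\Rightarrow$ $H_\theta$ surjective'': assume $H_\theta$ is not surjective; then Lemma \ref{goe_unif1} gives that $H_\theta$ is not pre-injective. For the implication ``$H_\theta$ surjective $\Rightarrow$ $H_\theta$ pre-injective'': assume $H_\theta$ is not pre-injective; then Lemma \ref{goe_unif2} gives that $H_\theta$ is not surjective. Combining the two contrapositives, surjectivity and pre-injectivity of $H_\theta$ are equivalent, which is exactly the assertion. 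So the proof is a two-line bookkeeping step once the lemmas are in hand.

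Since all the substance has been isolated in the two lemmas (and in the counting estimate of Lemma \ref{ineq1}), there is no genuine obstacle left at this stage. The one point worth flagging is conceptual rather than technical: uniform recurrence enters each lemma in precisely one place, namely to force the finite fragment of $\theta$ witnessing the failure -- an orphan-producing domain $D$ in Lemma \ref{goe_unif1}, a pair of mutually substitutable patterns in Lemma \ref{goe_unif2} -- to reappear inside \emph{every} hypercube of some fixed width $n$. That density of copies is what permits a width-$kn$ hypercube to be split into $k^d$ blocks each carrying a multiplicative defect, so that Moore's inequality $(s^{n^d}-1)^{k^d} < s^{(kn-2r)^d}$ can be applied for large $k$. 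Thus assembling Theorem \ref{goe_unif} introduces nothing new; the real work was the recurrence-driven counting already carried out in the lemmas.
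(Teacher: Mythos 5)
Your proposal is correct and coincides with the paper's own proof, which likewise deduces the theorem immediately by combining the contrapositives supplied by Lemmas \ref{goe_unif1} and \ref{goe_unif2}. Nothing further is needed.
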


\begin{proof} 
The statement follows from Lemmas \ref{goe_unif1} and \ref{goe_unif2}. 
\qed
\end{proof}

\subsection{The 1-dimensional case}

In the 1-dimensional case, the distribution only needs to be recurrent for the GoE theorem to hold. The proof is again a modification of the original proof for regular CA.

\begin{lemma} \label{ineq2}
Let $s,n,r \in \Z_+$. For all sufficiently large $m \in \Z_+$, it holds that
\begin{align*}
(s^n-1)^m s^{k-nm} < s^{k-2r},
\end{align*}
for all $k \in \Z_+$.
\end{lemma}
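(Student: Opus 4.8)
The plan is to eliminate the parameter $k$ from the inequality and reduce it to a single exponential comparison in $m$ alone. First I would dispose of the degenerate case $s = 1$: then $(s^n-1)^m = 0$ for every $m \geq 1$, while $s^{k-2r} = 1 > 0$, so the inequality holds for every $m \geq 1$ and every $k$. Assume henceforth $s \geq 2$.

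Since $s > 0$, the quantity $s^{k-nm}$ is a positive real number for every (possibly negative) integer exponent $k-nm$, so dividing both sides of $(s^n-1)^m s^{k-nm} < s^{k-2r}$ by $s^{k-nm}$ preserves the inequality and cancels $k$ on the right-hand side. The inequality becomes $(s^n-1)^m < s^{nm-2r}$, which no longer mentions $k$ at all. Rearranging, this is equivalent to
\begin{align*}
s^{2r} < \left(\frac{s^n}{s^n-1}\right)^m = \left(1 + \frac{1}{s^n-1}\right)^m .
\end{align*}

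Now the left-hand side $s^{2r}$ is a constant depending only on $s$ and $r$, whereas, since $s \geq 2$, the base $1 + \tfrac{1}{s^n-1}$ is strictly greater than $1$, so the right-hand side tends to infinity as $m \to \infty$. Hence there is a threshold $m_0 = m_0(s,n,r)$ (one may even write it explicitly as any integer exceeding $\tfrac{2r\ln s}{\ln s^n - \ln(s^n-1)}$) such that the displayed inequality, and therefore the original one, holds for every $m \geq m_0$. Because $k$ was already cancelled, this single threshold works simultaneously for all $k \in \Z_+$, which is exactly the claim.

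I do not expect a genuine obstacle here; the only points that need care are that the cancellation of $k$ is precisely what makes the choice of "sufficiently large $m$" uniform in $k$, and the separate treatment of $s = 1$ (together with the harmless observation that negative exponents of the positive real $s$ cause no difficulty).
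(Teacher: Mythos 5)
Your proof is correct and follows essentially the same route as the paper: both reduce the claim to the $k$-free inequality $(s^n-1)^m < s^{nm-2r}$ (you divide by $s^{k-nm}$, the paper multiplies it back in), which is exactly what makes the threshold on $m$ uniform in $k$. The only difference is that the paper obtains this core inequality by citing Lemma~\ref{ineq1} with $d=1$, whereas you prove it directly via the elementary observation that $\left(1+\tfrac{1}{s^n-1}\right)^m \to \infty$ (and you also treat $s=1$ separately), which is a harmless, self-contained variation.
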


\begin{proof}
By Lemma \ref{ineq1}, for all sufficiently large $m \in \Z_+$ it holds that
\begin{align*}
(s^n -1)^m < s^{mn-2r}. 
\end{align*}
Then for all $k \in \Z_+$
\begin{align*}
(s^n -1)^m s^{k-nm} &< s^{mn-2r} s^{k-nm} = s^{k-2r}
\end{align*}
\qed
\end{proof}

\begin{lemma} \label{rgoe1}
Let $\theta \in \mathcal{R}^\Z$ be a recurrent rule distribution. If $H_\theta$ is not surjective, then it is not pre-injective.
\end{lemma}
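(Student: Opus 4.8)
The plan is to mirror the strategy of Lemma~\ref{goe_unif1} but adapt the counting argument to one dimension, where we can only assume recurrence rather than uniform recurrence. Suppose $H_\theta$ is not surjective. By Lemma~\ref{orphan} there is a finite domain $D\subseteq\Z$, which we may take to be an interval, such that $H_{\theta|D}$ is not surjective; fix a pattern $p\in\Sigma^D$ with no $H_{\theta|D}$-preimage, and let $n$ be the width of $D$. Let $r\in\Z_+$ be large enough that $H_\theta$ uses radius-$r$ local rules.

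The key difference from the uniformly recurrent case: we cannot guarantee that \emph{every} window of some fixed width contains a copy of $\theta_{|D}$, only that the pattern $\theta_{|D}$ recurs. So instead I would use recurrence iteratively to find $m$ disjoint occurrences of $\theta_{|D}$ inside one large interval. Concretely, since $\theta$ is recurrent, the pattern $\theta_{|D}$ appears infinitely often in $\theta$ (a recurrent pattern appearing once must appear infinitely often, since after finding a second copy one restricts attention to a larger window containing both and repeats). Hence for any $m\in\Z_+$ there is an interval $C$ of some width $k$ containing $m$ pairwise-disjoint translated copies $D_1,\ldots,D_m$ of $D$ on each of which $\theta$ agrees with $\theta_{|D}$. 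Let $C'$ be the interval of width $k-2r$ centred in $C$, and set
\begin{align*}
K = \set{c\in\Sigma^\Z}{\mathrm{supp}_q(c)\subseteq C'}
\end{align*}
for a fixed $q\in\Sigma$, so $|K| = s^{k-2r}$ with $s=|\Sigma|$. Now count $H_\theta(K)$: all configurations in $K$ agree outside $C'$, so the images agree outside $C$; and on each of the $m$ windows $D_i$, since $\theta$ there is a copy of $\theta_{|D}$, the image pattern cannot be the (translated) orphan $p$, cutting the number of possibilities on that window from $s^n$ to at most $s^n-1$. Treating the $m$ windows as independent coordinates and bounding the remaining $k-nm$ cells of $C$ trivially by $s^{k-nm}$ choices, we get $|H_\theta(K)| \le (s^n-1)^m\, s^{k-nm}$.

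Finally, invoke Lemma~\ref{ineq2}: for $m$ large enough (independent of $k$), $(s^n-1)^m s^{k-nm} < s^{k-2r} = |K|$. Since $k$ is determined once we choose $m$ and find the interval $C$ with $m$ disjoint copies, we simply pick such an $m$ first, then extract the corresponding $C$. Then $|H_\theta(K)| < |K|$, so two distinct $c_1,c_2\in K$ have $H_\theta(c_1)=H_\theta(c_2)$; since both lie in $K$ they agree outside the finite set $C'$, hence are asymptotic, and so $H_\theta$ is not pre-injective.

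I expect the main obstacle to be the bookkeeping in the step that produces an interval containing $m$ pairwise-disjoint copies of $\theta_{|D}$: one must argue carefully from the definition of recurrence (which only promises a \emph{second} occurrence of any given finite pattern) up to arbitrarily many disjoint occurrences, by repeatedly enlarging the window to swallow the copies found so far and applying recurrence to that larger pattern. One must also make sure the copies can be taken pairwise disjoint — enlarging $D$ to a bigger interval that properly contains it before applying recurrence handles this, since a copy of the enlarged pattern that overlaps an existing copy of $D$ would force unwanted coincidences only in a controlled region; taking the enlargement wide enough relative to the spacing makes disjointness automatic. The rest is the same Moore-style counting as in Lemma~\ref{goe_unif1}, now balanced by Lemma~\ref{ineq2} instead of Lemma~\ref{ineq1}.
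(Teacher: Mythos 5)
Your proposal is correct and follows essentially the same route as the paper's own proof: apply Lemma~\ref{orphan} to get a non-surjective interval rule, fix $m$ via Lemma~\ref{ineq2} (for all $k$) before using recurrence to find an interval $C$ containing $m$ disjoint copies of $\theta_{|[i,j]}$, and then run the Moore-style count $|H_\theta(K)|\le (s^n-1)^m s^{k-nm} < s^{k-2r} = |K|$ on the set $K$ of configurations supported in $C'$. Your extra care about upgrading recurrence to infinitely many, pairwise disjoint occurrences fills in a step the paper states without proof, but it does not change the argument.
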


\begin{proof}

Let $r \in \Z_+$ be large enough that $H_\theta$ can be defined with radius-$r$ local rules. Let $s = |\Sigma|$. Suppose that $H_\theta$ is not surjective. Then by Lemma \ref{orphan} there are $i,j \in \Z$ such that $H_{\theta|[i,j]}$ is not surjective. Let $n = |j-i| +1$.

Let $m \in \Z_+$ be large enough that $(s^n-1)^m s^{k-nm} < s^{k-2r}$ for all $k \in \Z_+$. By Lemma \ref{ineq2}, such a number $m$ exists. Because $\theta$ is recurrent, it has infinitely many copies of the pattern $\theta_{|[i,j]}$. Let $C \subseteq \Z^d$ be a segment such that $\theta_{|C}$ contains $m$ disjoint copies of $\theta_{|[i,j]}$. Let $k$ be the length of $C$, and let $C'$ be a length $k-2r$ segment centered on $C$. Finally, let $q\in \Sigma$ and
\begin{align*}
K = \set{c \in \Sigma^\Z}{\mathrm{supp}_q (c) \subseteq C'}.
\end{align*}
Now $|K| = s^{|C'|} = s^{k-2r}$. Clearly any pair of configurations from $K$ is asymptotic.

Consider the image $H_\theta (K)$. Because $H_{\theta|[i,j]}$ is not surjective, there is a finite pattern $p \in \Sigma^{[i,j]}$ with no $H_{\theta|[i,j]}$ pre-image. Then no translated copy can appear in the same position as a copy of $\theta_{|[i,j]}$ in any configuration in $H_\theta (K)$. Then there are at most $(s^n-1)^ms^{k-nm}$ configurations in $H_\theta (K)$. 

Therefore, due to the selection of $m$
\begin{align*}
|H_\theta (K)| \leq (s^n -1)^m s^{k-nm} < s^{k-2r} = |K|.
\end{align*}
Then there must be configurations $c_1, c_2 \in K$ such that $c_1 \neq c_2$ and $H_\theta (c_1) = H_\theta (c_2)$. Hence $H_\theta$ is not pre-injective.

\end{proof}

\begin{lemma} \label{rgoe2}
Let $\theta \in \mathcal{R}^\Z$ be a recurrent rule distribution. If $H_\theta$ is not pre-injective, then it is not surjective. 
\end{lemma}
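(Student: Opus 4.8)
The plan is to follow the proof of Lemma~\ref{goe_unif2} almost verbatim, replacing the input it uses from uniform recurrence --- ``every sufficiently wide hypercube of $\theta$ contains a copy of $\theta_{|D}$'' --- by the weaker input supplied by recurrence --- ``$\theta$ contains, somewhere, a segment carrying $m$ pairwise disjoint copies of $\theta_{|D}$'' --- and then carrying out the final count with Lemma~\ref{ineq2} in place of Lemma~\ref{ineq1}, exactly as Lemma~\ref{rgoe1} does for the other direction.

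Concretely, let $r\in\Z_+$ be large enough that $H_\theta$ is given by radius-$r$ local rules and put $s=|\Sigma|$. Assuming $H_\theta$ is not pre-injective, fix asymptotic $c_1\neq c_2$ with $H_\theta(c_1)=H_\theta(c_2)$, and choose a segment $D\subseteq\Z$ in which $\mathrm{diff}(c_1,c_2)$ sits with a safe margin on both sides --- precisely the margin used in the proof of Lemma~\ref{goe_unif2}, namely more than twice the radius --- and write $n$ for the length of $D$. The central observation, lifted from Lemma~\ref{goe_unif2}, is that whenever $\theta_{|D'}$ is a translated copy of $\theta_{|D}$, the patterns $p_1,p_2\in\Sigma^{D'}$ obtained by copying $c_1$ respectively $c_2$ onto $D'$ are \emph{interchangeable}: in any configuration, swapping $p_1$ for $p_2$ on $D'$ changes the image under $H_\theta$ nowhere, because the cells on which $p_1$ and $p_2$ disagree lie far enough inside $D'$ that they influence only cells of $D'$, and on those cells the local rules and the relevant neighbourhood states agree with those of $c_1$ and $c_2$, whose images coincide.

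Now, recurrence makes $\theta_{|D}$ occur infinitely often in $\theta$; since these occurrences are unbounded, a greedy choice yields, for every $m$, a segment $C\subseteq\Z$ containing $m$ pairwise disjoint copies $D'_1,\dots,D'_m$ of $\theta_{|D}$. Use Lemma~\ref{ineq2} to fix $m$ so large that $(s^n-1)^m s^{k-nm}<s^{k-2r}$ for every $k\in\Z_+$, take such a $C$, let $k$ be its length, and let $C'$ be the length-$(k-2r)$ segment centred in $C$, so that $N(C')\subseteq C$. Suppose, for contradiction, that $H_\theta$ is surjective. By Lemma~\ref{orphan}, $H_{\theta|C'}$ is surjective, so every pattern over $C'$ is the image under $H_{\theta|C'}$ of some pattern over $C$. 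Given such a preimage, replace its restriction to each $D'_i$ on which it equals $p_2$ by $p_1$; by interchangeability this leaves the image over $C'$ unchanged, and the $D'_i$ being disjoint these replacements may be performed independently, yielding a preimage whose restriction to every $D'_i$ differs from $p_2$. There are at most $(s^n-1)^m s^{k-nm}$ patterns over $C$ of this shape, hence at most that many patterns over $C'$ are hit, while there are $s^{k-2r}$ patterns over $C'$; by the choice of $m$ this is absurd. Thus $H_\theta$ is not surjective.

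The only delicate point is the interchangeability claim together with its simultaneous application to all $m$ copies, but this is exactly the mechanism already isolated in the proof of Lemma~\ref{goe_unif2} (and in \cite{dennuz2}), and disjointness makes the $m$ local surgeries independent. The uniformity of Lemma~\ref{ineq2} in $k$ is what lets the argument go through without controlling the length of $C$: unlike in the uniformly recurrent case we cannot force a copy of $\theta_{|D}$ into every length-$n$ block of $C$, only $m$ of them somewhere in a segment whose total length $k$ we do not know in advance.
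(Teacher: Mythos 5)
Your proposal is correct and follows essentially the same route as the paper: the interchangeability of the two patterns $p_1,p_2$ over any translated copy of $\theta_{|D}$ (using the margin inside $D$ and the equality $H_\theta(c_1)=H_\theta(c_2)$), recurrence supplying $m$ disjoint copies in some segment $C$, and the count via Lemma~\ref{orphan} and Lemma~\ref{ineq2} with its uniformity in $k$. The only differences (swapping $p_2$ for $p_1$ rather than $p_1$ for $p_2$, and the bookkeeping of the radius) are immaterial.
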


\begin{proof}
Let $r \in 2\Z_+$ be large enough that $H_\theta$ can be defined with radius-$\frac{r}{2}$ local rules. Let $s = |\Sigma|$. Suppose that $H_\theta$ is not pre-injective. Let $c_1,c_2 \in \Sigma^\Z$ be asymptotic configurations such that $c_1 \neq c_2$ and $H_\theta (c_1) = H_\theta (c_2)$.

Because $c_1$ and $c_2$ are asymptotic, there is a segment $[i+r,j-r] \subseteq \Z$ such that $\mathrm{diff}(c_1,c_2) \subseteq [i+r, j-r]$. Let $n = |i-j| +1$. Let $p_1 = c_{1|[i,j]}$ and $p_2 = c_{2|[i,j]}$. Let $m$ be as in Lemma \ref{ineq2}.

Let $e_1 \in \Sigma^\Z$ be a configuration containing a copy of $p_1$ in some segment $[i+y,j+y]$ where $y\in\Z$ is such that $\theta_{|[i+y,j+y]}$ is a copy of $\theta_{|[i,j]}$. Let $e_2 \in \Sigma^\Z$ be the configuration obtained by replacing the translated copy of $p_1$ in $e_1$ with a translated copy of $p_2$. Consider cell $x \in \Z$. If $N(x) \cap \mathrm{diff}(e_1,e_2) = \emptyset$, then clearly $H_\theta (e_1)(x) = H_\theta (e_2)(x)$.  If $N(x) \cap \mathrm{diff}(e_1,e_2) \neq \emptyset$ then the distance of $x$ from a differing cell is at most $\frac{r}{2}$. Then $x \in [i+y+\frac{r}{2},j+y-\frac{r}{2}]$, and
\begin{align*}
H_\theta (e_1)(x) = H_\theta (c_1)(x-y)= H_\theta(c_2)(x-y) = H_\theta (e_2)(x).
\end{align*}
Therefore $H_\theta (e_1) = H_\theta (e_2)$, meaning a copy of $p_1$ can be replaced with a copy of $p_2$ without affecting the image of the configurations, provided that the copy of $p_1$ lies in the same cells as a copy of $\theta_{|[i,j]}$ in $\theta$.

Let $C$ be a segment of such that $\theta_C$ contains $m$ disjoint copies of $\theta_{[i,j]}$. Since $\theta$ is recurrent, such a segment exists. Let $k\in \Z_+$ be the length of $C$. Let $C'$ be a segment of length $kn-2r$ centered on $C$. If $H_\theta$ is surjective, then $H_{\theta|C'}$ is surjective, meaning every pattern in the domain $C'$ has a pre-image in the domain $C$. 

There are $s^{k-2r}$ possible patterns in the domain $C'$. On the other hand, because each copy of pattern $p_1$ that shares its domain with one of the $m$ disjoint copies of $\theta_{|[i,j]}$ can be replaced with a copy of $p_2$ without affecting the image, each pattern in $C'$ has a pre-image with no copies of $p_1$ on a copy of these $m$ disjoint positions. There are $(s^n-1)^m s^{n(k-m)}$ such patterns. Because $(s^n-1)^m s^{k-nm} < s^{k-2r}$,  there is some pattern in $C'$ with no pre-image. Therefore $H_{\theta|C'}$ is not surjective and hence $H_\theta$ is not surjective.
\qed
\end{proof}

\begin{theorem}
Let $\theta \in \mathcal{R}^\Z$ be a recurrent rule distribution. The NUCA $H_\theta$ is surjective if and only if it is pre-injective.
\end{theorem}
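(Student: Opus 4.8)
The plan is to derive the theorem directly from the two preceding lemmas, in exactly the way that the uniformly recurrent case (Theorem \ref{goe_unif}) is obtained from Lemmas \ref{goe_unif1} and \ref{goe_unif2}. The statement is a biconditional, so I would split it into its two contrapositive halves: ``$H_\theta$ not surjective $\Rightarrow$ $H_\theta$ not pre-injective'' and ``$H_\theta$ not pre-injective $\Rightarrow$ $H_\theta$ not surjective''. The first is precisely Lemma \ref{rgoe1} and the second is precisely Lemma \ref{rgoe2}, so once both are in hand there is nothing further to do, and the proof is just: by Lemma \ref{rgoe1}, if $H_\theta$ is surjective it is pre-injective; by Lemma \ref{rgoe2}, if $H_\theta$ is pre-injective it is surjective; hence the two properties are equivalent.

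It is worth recording where the actual content sits, since that is what a referee will want to check. For the Moore direction (Lemma \ref{rgoe1}), the key move is that non-surjectivity localizes to some segment $[i,j]$ by Lemma \ref{orphan}, recurrence supplies a long segment $C$ carrying $m$ disjoint copies of $\theta_{|[i,j]}$, and the counting inequality of Lemma \ref{ineq2} forces the image of the set $K$ of configurations with $q$-support contained in the shrunken segment $C'$ to be strictly smaller than $K$ itself, producing a collision between two distinct asymptotic configurations. For the Myhill direction (Lemma \ref{rgoe2}), a witnessing pair $c_1 \neq c_2$ with $H_\theta(c_1) = H_\theta(c_2)$ is transplanted onto each of the $m$ disjoint copies of $\theta_{|[i,j]}$ inside $C$, and the same inequality shows that the patterns on $C'$ cannot all be attained, contradicting surjectivity of $H_{\theta|C'}$ and hence, via Lemma \ref{orphan} again, of $H_\theta$.

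There is no real obstacle at this stage: the technical difficulty has already been absorbed into Lemma \ref{ineq2} and into the transplantation arguments of Lemmas \ref{rgoe1} and \ref{rgoe2}. The only genuine subtlety, compared with the uniformly recurrent setting, is that in the one-dimensional recurrent case one cannot tile a hypercube with guaranteed copies of $\theta_{|D}$ and must instead count along a single carefully chosen segment containing $m$ repetitions of $\theta_{|[i,j]}$; this is exactly the reason the one-dimensional form of the counting lemma (Lemma \ref{ineq2}) is formulated and used in place of Lemma \ref{ineq1}. Given that, the theorem follows immediately.
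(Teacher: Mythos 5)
Your proposal is correct and matches the paper's proof exactly: the theorem is obtained by combining Lemma \ref{rgoe1} (non-surjective implies not pre-injective) with Lemma \ref{rgoe2} (not pre-injective implies not surjective), which together give the biconditional. Your additional remarks on where the real work sits accurately summarize those lemmas but are not needed for the deduction itself.
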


\begin{proof}
The statement follows from Lemmas \ref{rgoe1} and \ref{rgoe2}.
\qed
\end{proof}

\section{Non-recurrent rule distributions}

We now know that a 1-dimensional NUCA given by a recurrent rule distribution must satisfy the GoE theorem. Then, given a recurrent distribution template, any assignment to that template satisfies the theorem. In this section we show that these are in fact exactly the templates that always satisfy the theorem; for any non-recurrent template, there exists an assignment which is pre-injective, but not surjective, and an assignment which is surjective, but not pre-injective.

We begin by examining the special case of a distribution of two local rule templates, where one of the two templates can appears $n$ times in a row at most once. We show that for such a template, for any $n\geq 1$, for both directions of the GoE theorem, there is an assignment that does not satisfy the theorem.

\subsection{Moore's theorem}

In this section we present a construction for assignments to the aforementioned special case templates which give distributions whose associated global update rules are pre-injective, but not surjective. First we define some notation and terminology used in the following proofs.

\begin{notation}
For $a,b \in \Z_2$, we denote $a \oplus b = a + b \mod 2$. For $a_1,\ldots,a_n \in \Z_2$, we denote
\begin{align*}
\bigoplus_{i=1}^{n} a_i = a_1 \oplus \ldots \oplus a_n.
\end{align*}
\end{notation} 

\begin{notation}
Let $n \geq 1$ and $\Sigma = \Z_2$. The local rules $f_n$ and $g_n$ are ones with neighbourhood $N =(-n,\ldots,n)$ and which map
\begin{align*}
f_n (a_{-n} ,\ldots, a_n) &= a_{-n} \oplus a_0, \\
g_n (a_{-n} ,\ldots, a_n) &= a_{-n+1} \oplus \ldots \oplus a_n.
\end{align*}
An illustration of these rules is in Figure \ref{fig_fg}.

Let $\theta \in \{f_n,g_n\}^\Z$. For each cell $x \in \Z$, if $\theta (x) = f_n$ we call $x$ an \emph{$f$-cell} and if $\theta (x) = g_n$ we call it a \emph{$g$-cell}. We say that cell $x$ \emph{sees} cell $y$ if $y \in N(x)$ and the state of $y$ affects the computation of the local rule at cell $x$. In these terms, an $f$-cell $x$ sees cells $x-n$ and $x$, and a $g$-cell sees all cells in the range $x-n+1,\ldots,x+n$.

For the sake of clarity, we omit the states of dummy neighbours, and denote
\begin{align*}
f (a_{-n}, a_0) &= f_n(a_{-n} ,\ldots, a_n), \\
g (a_{-n+1} ,\ldots, a_n) &= g_n (a_{-n} ,\ldots, a_n),
\end{align*}
when $n$ is fixed and clear from context.
\end{notation}

\begin{figure}
\begin{center}
\includegraphics[scale=0.8]{"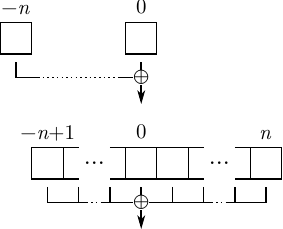"}
\caption{The operation of rules $f_n$ and $g_n$, top to bottom respectively.} \label{fig_fg}
\end{center}
\end{figure}

\begin{lemma} \label{nsurj}
Let $n \geq 1$ and $\Sigma = \Z_2$. Let $\mathcal{R} = \{f_n, g_n\}$ and $\theta \in \mathcal{R}^\Z$ be a rule distribution with pattern $g_n(f_n)^n$. Then there is a configuration $c\in \Sigma^\Z$ with no $H_\theta$ -pre-image.
\end{lemma}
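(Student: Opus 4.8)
The plan is to exploit that $f_n$ and $g_n$ are $\mathbb{Z}_2$-linear, so every partial update rule is linear over $\mathbb{Z}_2$, and to show that on the segment carrying the occurrence of $g_n(f_n)^n$ the output is constrained by a forced linear relation; any configuration violating that relation on that segment is then an orphan. Say the pattern occupies cells $[x_0,x_0+n]$, so $\theta(x_0)=g_n$ and $\theta(x_0+i)=f_n$ for $i=1,\dots,n$. For an arbitrary configuration $e$, the output at the $g$-cell $x_0$ is the XOR of the states $e(x_0-n+1),\dots,e(x_0+n)$ (the rule $g_n$ omits exactly the left endpoint of its window), while the output at the $f$-cell $x_0+i$ is $e(x_0+i-n)\oplus e(x_0+i)$.

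The one computation to carry out is to XOR the $n$ outputs of the $f$-cells: the terms $e(x_0+i-n)$ for $i=1,\dots,n$ run over $e(x_0-n+1),\dots,e(x_0)$, the terms $e(x_0+i)$ run over $e(x_0+1),\dots,e(x_0+n)$, these two ranges are disjoint and together exhaust $[x_0-n+1,x_0+n]$, so no cancellation occurs and the XOR equals $e(x_0-n+1)\oplus\cdots\oplus e(x_0+n)$ — precisely the output of the $g$-cell. Hence for every $e\in\Sigma^{\Z}$ one has $H_\theta(e)(x_0)=\bigoplus_{i=1}^{n}H_\theta(e)(x_0+i)$.

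It follows at once that any $c\in\Sigma^{\Z}$ with $c(x_0)=1$ and $c(x_0+1)=\cdots=c(x_0+n)=0$ (arbitrary elsewhere) has no $H_\theta$-preimage, since a preimage $e$ would force $1=c(x_0)=\bigoplus_{i=1}^{n}c(x_0+i)=0$. Equivalently, the image of $H_{\theta|[x_0,x_0+n]}$ is contained in the proper subspace $\{p\in\Sigma^{[x_0,x_0+n]} : \bigoplus_{x} p(x)=0\}$, so $H_{\theta|[x_0,x_0+n]}$ is not surjective and Lemma~\ref{orphan} supplies an orphan for $H_\theta$; either packaging gives the claim.

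I expect no genuine obstacle; the content is entirely in the index bookkeeping of the middle paragraph, where one must use that $f_n$ reads exactly cells $x-n$ and $x$ and that $g_n$ sums exactly the window $[x-n+1,x+n]$, so that the two "staircases" of $f$-contributions tile the $g$-window without overlap. Note that the values of $\theta$ outside $[x_0,x_0+n]$ are irrelevant, since the forced relation involves only $\theta$ and $e$ on that window.
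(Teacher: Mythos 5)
Your proof is correct and is essentially the paper's argument: both rest on the observation that the windows read by the $n$ consecutive $f$-cells tile the $g$-cell's window $[x_0-n+1,x_0+n]$ disjointly, so that the image must satisfy $H_\theta(e)(x_0)=\bigoplus_{i=1}^{n}H_\theta(e)(x_0+i)$, which the configuration $c(x_0)=1$, $c(x_0+1)=\cdots=c(x_0+n)=0$ violates. The paper phrases this as a parity-counting contradiction for an assumed pre-image of that same $c$, whereas you state it as a forced $\Z_2$-linear relation; the content is identical.
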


\begin{proof}
Assume the $n$-long block of $f$-cells is in the segment $[x+1,x+n]$, meaning $\theta (x) = g_n$.  Let $c\in \Sigma^\Z$ be a configuration with $c(x) = 1$ and $c(y) = 0$ for all $y\in [x+1,x+n]$. This is illustrated in Figure \ref{fig_nsurj}.

\begin{figure}
\begin{center}
\includegraphics[scale=0.8]{"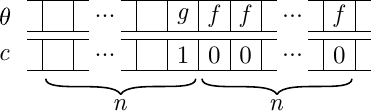"}
\caption{The rule distribution and configuration used in the argument for non-surjectivity of $H_\theta$. Pictured is the range seen by the noted cells.} \label{fig_nsurj}
\end{center}
\end{figure}

Now suppose there is $e \in  \Sigma^\Z$ such that $H_\theta (e) = c$. Because $c(x) = 1$, $x$ sees an odd number of cells with state 1 in $e$. Therefore there are an odd number of cells with state 1 in $e$ in the range $[x-n+1, x+n]$. 

Then consider the $f$-cells. Each $y \in [x+1,x+n]$ sees $y-n$ and $y$. Then the $f$-cells in total see the cells in the ranges $[x-n+1,x-n]$ and $[x+1, x+n]$, covering the full range $[x-n+1, x+n]$ seen by $x$. Additionally, no two cells in $[x+1,x+n]$ see the same cell. Then because $c(y) = 0$ for all $y \in [x+1,x+n]$, each $y$ sees an even number of cells with state 1. This then implies there are an even number of cells with state 1 in the range $[x-n+1,x-n]$, which is a contradiction. Hence $c$ has no pre-image.
\qed
\end{proof}

\begin{lemma} \label{preinj}
Let $n \geq 1$ and $\Sigma = \Z_2$. Let $\mathcal{R} = \{f_n, g_n\}$ and $\theta \in \mathcal{R}^\Z$ be such that there is at most one length $n$ block of adjacent cells with rule $f_n$ (and hence all other blocks of adjacent rules $f_n$ are at most $n-1$ cells long). Then $H_\theta$ is pre-injective.
\end{lemma}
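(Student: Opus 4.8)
The plan is to exploit linearity. Since $\Sigma = \Z_2$ and each of $f_n,g_n$ is a $\Z_2$-linear form in its arguments, the global rule $H_\theta$ is $\Z_2$-linear: $H_\theta(c\oplus e)=H_\theta(c)\oplus H_\theta(e)$, where $\oplus$ denotes cellwise addition in $\Z_2$. Hence if $c,e$ are asymptotic with $c\neq e$, then $\delta:=c\oplus e$ is a nonzero configuration with finite support and $H_\theta(c)\oplus H_\theta(e)=H_\theta(\delta)$; so $H_\theta$ is pre-injective if and only if the all-$0$ configuration is the only finitely supported element of $\ker H_\theta$. I would therefore assume for contradiction that there is a finitely supported $\delta\neq 0$ with $H_\theta(\delta)=0$ and deduce that $\theta$ must contain two disjoint blocks of $n$ consecutive $f$-cells.

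The main device is the auxiliary configuration $\mu$ given by $\mu(x)=\delta(x)\oplus\delta(x-n)$. It has finite support, and it is nonzero, for $\mu\equiv 0$ would make $\delta$ constant on each residue class of $\Z$ modulo $n$ and hence, being finitely supported, identically $0$. Rewriting $H_\theta(\delta)=0$ in terms of $\mu$ gives: $\mu(x)=0$ for every $f$-cell $x$; and $\mu(x+1)\oplus\mu(x+2)\oplus\cdots\oplus\mu(x+n)=0$ for every $g$-cell $x$. (The first is immediate from the definition of $f_n$; the second follows from the identity $\bigoplus_{i=1}^{n}\mu(x+i)=\bigoplus_{j=x-n+1}^{x+n}\delta(j)$ together with the definition of $g_n$.) Two consequences I would record: $\mathrm{supp}_0(\mu)$ consists only of $g$-cells; and, since the sum of $\mu(x)$ over a fixed residue class modulo $n$ telescopes to $0$, the configuration $\mu$ has even weight on each residue class, which combined with $\mu\neq 0$ forces $|\mathrm{supp}_0(\mu)|\geq 2$.

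Next comes a boundary argument at the right end of $\mathrm{supp}_0(\mu)$. Let $M=\max\mathrm{supp}_0(\mu)$. I would show by induction on $k=1,\ldots,n$ that cell $M-k$ is an $f$-cell: if $M-k$ were a $g$-cell, its window $[M-k+1,M-k+n]$ would meet $\mathrm{supp}_0(\mu)$ only in the single cell $M$ --- the cells $M-k+1,\ldots,M-1$ carry $\mu=0$ by the induction hypothesis, and the cells past $M$ carry $\mu=0$ by maximality of $M$ --- so the $g$-cell constraint would read $\mu(M)=0$, contradicting $\mu(M)=1$. Hence $[M-n,M-1]$ is a block of $n$ consecutive $f$-cells, so $\mu$ vanishes on it, so the second largest element $M_2$ of $\mathrm{supp}_0(\mu)$ satisfies $M_2\leq M-n-1$. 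This separation is precisely what lets me rerun the argument at $M_2$: the windows $[M_2-k+1,M_2-k+n]$ with $1\leq k\leq n$ have maximum $M_2+n-1<M$, so they never reach $M$ and meet $\mathrm{supp}_0(\mu)$ only at $M_2$; the same induction shows $[M_2-n,M_2-1]$ is another block of $n$ consecutive $f$-cells. Since $M_2-1\leq M-n-2<M-n$, the blocks $[M_2-n,M_2-1]$ and $[M-n,M-1]$ are disjoint, contradicting the hypothesis that $\theta$ has at most one length-$n$ block of $f$-cells. Therefore no such $\delta$ exists, and $H_\theta$ is pre-injective.

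The step I expect to demand the most care is the index bookkeeping in the two inductions: verifying precisely that a hypothetical $g$-cell's window at $M-k$ (respectively $M_2-k$) lies inside the region where $\mu$ is already known to vanish except at $M$ (respectively $M_2$), and that the separation $M_2\leq M-n-1$ extracted from the first block really does suffice to rerun the argument. The reason the proof is organized around the two rightmost points of $\mathrm{supp}_0(\mu)$ rather than around its two ends is the asymmetry of the rules: in the $\mu$-coordinates a $g$-cell constraint only involves cells to the right of that cell, so the clean ``only the extreme $1$ survives'' reasoning works from the right but not from the left.
\qed
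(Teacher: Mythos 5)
Your proof is correct, but after the shared first step it follows a genuinely different route from the paper's. Both arguments begin with the same linearity reduction: since $f_n$ and $g_n$ are $\Z_2$-linear, pre-injectivity is equivalent to the kernel of $H_\theta$ containing no nonzero finitely supported configuration. The paper then works directly with the difference configuration $e$: the rightmost $1$ of $e$ is shown to force a length-$n$ block of $f$-cells and a back-to-back repetition $10^{n-1}10^{n-1}$, and a separate inductive lemma shows that any back-to-back repeated $n$-segment lying left of the unique $f$-block forces a further copy immediately to its left; iterating produces infinitely many $1$'s, contradicting finite support. You instead pass to the auxiliary configuration $\mu(x)=\delta(x)\oplus\delta(x-n)$, which converts the kernel condition into the clean local constraints ``$\mu$ vanishes at every $f$-cell'' and ``$\mu$ has zero sum on the window $[x+1,x+n]$ at every $g$-cell''; the telescoping parity argument gives $|\mathrm{supp}_0(\mu)|\geq 2$, and two short inductions at the two rightmost points $M$ and $M_2$ of $\mathrm{supp}_0(\mu)$ (I checked the index bookkeeping: the windows at $M-k$ and $M_2-k$, $1\leq k\leq n$, indeed meet $\mathrm{supp}_0(\mu)$ only in $M$, respectively $M_2$, and $M_2\leq M-n-1$ suffices) produce two disjoint length-$n$ runs of $f$-cells, a contradiction. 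Your route buys a wholly finite argument that avoids the paper's infinite leftward propagation, at the cost of the structural information the paper extracts about kernel elements (the $10^{n-1}$-repetition), which in your setup is implicitly encoded in $\mu$. One cosmetic point: to match the hypothesis as phrased (``at most one length-$n$ block, all other maximal blocks at most $n-1$ long'') it is worth saying explicitly that the two runs $[M-n,M-1]$ and $[M_2-n,M_2-1]$ lie in different maximal blocks because $M_2$ itself is a $g$-cell --- which is immediate from your observation that $\mathrm{supp}_0(\mu)$ contains only $g$-cells.
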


\begin{proof}
Suppose $c_1,c_2 \in \Sigma^\Z$ are asymptotic and $c_1 \neq c_2$. Let $e \in \Sigma^\Z$ be the configuration with $e(x) = c_1(x) \oplus c_2(x)$. Then $e(x) = 1$ if and only if $c_1(x) \neq c_2(x)$. Assume that $H_\theta (c_1) = H_\theta (c_2)$. 

Notice that $H_\theta (c_1)(x) = H_\theta (c_2)(x)$ if and only if the number of cells $y$ seen by $x$ such that $c_1(y) \neq c_2 (y)$ is even. This is if and only if $x$ sees an even number of cells $y$ where $e(y) = 1$, which in turn is if and only if $H_\theta (e)(x) = 0$. Hence $H_\theta (c_1) = H_\theta (c_2)$ if and only if $H_\theta (e)(x) = 0$ for all $x \in \Z$.

Let $x\in \Z$ be the rightmost cell such that $e(x) = 1$. Such a cell exists, because $c_1$ and $c_2$ are asymptotic. If then $\theta (x+n-1) = g_n$, then $H_\theta (e)(x+n-1) = g(1,0,\ldots,0) = 1$, which is a contradiction. Therefore $\theta(x+n-1) = f_n$.  Let us prove next that $e(x-n) = 1$ and, when $n\geq 2$, $e(x-n+1) = \ldots = e(x-1) = 0$. If $n=1$, then because $f(e(x-1), e(x)) = f(e(x-1),1) = 0$, we have $e(x-1) = 1$.

If $n\geq 2$ then $e(x+n-1)=0$, and because $f(e(x-1), e(x+n-1)) = f(e(x-1),0) = 0$, we have $e(x-1) = 0$. Suppose then for some $y \in [x+1, x+n-1]$ and all $z \in [y,x+n-1]$ it holds that $\theta(z) = f_n$. Then also $e(z-n) = 0$ for each $z$ in the range. Now $\theta (y-1) = f_n$; if $\theta (y-1) = g_n$, then $y-1$ would see exactly one cell with state $1$, meaning $H_\theta(e)(y-1) = 1$. Then by induction, for all $y \in [x, x+n-1]$ it holds that $\theta (y) = f_n$. Also then it holds that $e(x-n) = 1$ and $e(x-n+1) = \ldots = e(x-1) = 0$. 

Note that there now is a length $n$ block of $f$-cells in the range $[x, x+n-1]$, meaning that to the left of cell $x$ there can only be at most length $n-1$ blocks of $f$-cells. The obtained $\theta$ and $e$ are illustrated in Figure \ref{fig_inj1}.

\begin{figure}
\begin{center}
\includegraphics[scale=0.8]{"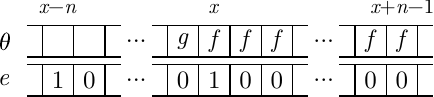"}
\caption{The obtained $\theta$ and $e$. Indices of the cells are annotated on the top row. } \label{fig_inj1}
\end{center}
\end{figure}

Suppose then that somewhere in $e$ there are two copies of the same $n$-long segment back to back. That is, somewhere in $e$ there appears the pattern $a_1 a_2 \ldots a_n a_1 a_2 \ldots a_n$. Let $y \in \Z$ be the leftmost cell of this pattern. Suppose also that the $n$-block of $f$-cells is somewhere to the right of cell $y+n-1$. We show that then $e(y-1) = a_n$.

Suppose $e(y-1) = \bar a_n \neq a_n$. Assume first that $n=1$. Since the only $f$-cell is to the right of $y$, for all $z<y$, $\theta (z) = g_1$. Then $H_\theta (e) (y-1) = g(e(y-1),e(y)) = g(\bar a_1,a_1) = 1$ which is a contradiction. Hence $e(y-1) = a_1$.

Assume then that $n\geq 2$. Now $\theta(y+n-1) = g_n$; if $\theta(y+n-1) = f_n$ then $H_\theta (e)(y+n-1) = f(\bar a_n, a_n) = 1$, a contradiction. Next we show that for all $z \in [y-1, y+n-2]$,  $\theta(z) = f_n$. Let $k\in [0,n-1]$, and assume that for all $z \in [y+k, y+n-2]$ it holds that $\theta (z) = f_n$. Then also it holds that for all such $z$, that $e(z) = e(z-n)$. Therefore $\theta(y+k-1) = f_n$; if $\theta (y+k-1) = g_n$, then

\begin{align*}
H_\theta (e) (y+k-1) &= 
g(a_{k+1},\ldots, a_{n-1}, \bar a_n, a_1, \ldots, a_{n-1}, a_n, a_1, \ldots, a_{k})
\\
&= a_1\oplus \ldots \oplus a_{n-1} \oplus \bar a_n \oplus a_1 \oplus \ldots \oplus a_{n-1} \oplus a_n
\\
&= (a_1 \oplus a_1) \oplus \ldots  \oplus (a_{n-1} \oplus a_{n-1}) \oplus (\bar a_n \oplus a_n)
\\
&= 0 \oplus \ldots \oplus 0 \oplus \ldots \oplus 0 \oplus 1 = 1.
\end{align*}


Now by induction, for any $n\geq 2$ and all $z \in [y-1, y+n-1]$, $\theta(z) = f_n$. But this is a contradiction, because now we have a length $n$ block of rules $f_n$ to the left of cell $y+n-1$ and hence $e(y-1) = a_n$. This argument is illustrated in Figure \ref{fig_inj2}.

\begin{figure}
\begin{center}
\includegraphics[scale=0.9]{"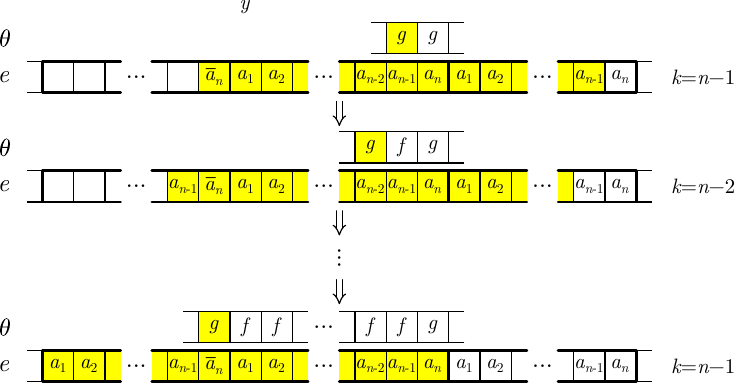"}
\caption{Illustration of the argument for why two repeated $n$-long patterns force a new repetition to their left. The position of cell $y$ is annotated on the top row. The highlighted cells are seen by the highlighted rule $g_n$.} \label{fig_inj2}
\end{center}
\end{figure}

Next, we recursively apply the previous argument to show that there is a copy of the pattern $a_1\ldots a_n$ immediately to the left of the original pair of identical patterns. Suppose we know that immediately left of the cell $y$ is the pattern $a_i\ldots a_n$, where $i \in [2,n]$. Then we have the pattern $a_i\ldots a_n a_1 \ldots a_{i-1} a_i\ldots a_n a_1 \ldots a_{i-1}$ in $e$ and the block of $f$-cells still to the right of its center, meaning by the previous, there is a cell with state $a_{i-1}$ immediately to the left of this pattern. Then by induction, the statement follows.

Now by the preceding, there must be infinitely many copies of $a_1\ldots a_n$ to the left in $e$. Then by the first part of the proof, there are infinitely many copies of the pattern $1(0^{n-1})$ in $e$. This is a contradiction, because it would imply there are infinitely many cells $z$ where $c_1(z) \neq c_2(z)$, but $c_1$ and $c_2$ were assumed to be asymptotic. Hence $H_\theta(c_1) \neq H_\theta (c_2)$ and $H_\theta$ is pre-injective. 
\qed
\end{proof}

\subsection{Myhill's theorem}

In this section we present a construction for assignments to the special case templates which give distributions whose associated global update rules are surjective, but not pre-injective. First we define some notation used in the following proofs.

\begin{notation}
Let $n\geq 1$ and $\Sigma = \Z_2 \times \Z_2$. For any $a=(a_1,a_2) \in \Sigma$, we denote $a^{(1)} = a_1$ and $a^{(2)} = a_2$.
\end{notation}

\begin{definition}
The local rules $\gamma_n$ and $\delta_n$ are ones with neighbourhood $(0,\ldots,n+1)$ such that
\begin{align*}
\gamma_n (a_0, \ldots, a_{n+1})^{(1)} &= a_0^{(1)} \oplus a_{n+1}^{(1)} \oplus a_0^{(2)}, \\
\gamma_n (a_0, \ldots, a_{n+1})^{(2)} &= \delta_n (a_0, \ldots, a_{n+1})^{(2)} = \bigoplus_{i=1}^{n+1} a_i, \\
\delta_n (a_0, \ldots, a_{n+1})^{(2)} &= \bigoplus_{i=1}^{n} a_i,
\end{align*}
for all $a_0,\ldots,a_{n+1} \in \Sigma$. These rules are illustrated in Figure \ref{fig_fg_myhill}. Like previously, if for $x\in \Z$ it holds that $\theta (x) = \gamma_n$, we call $x$ a $\gamma$-\emph{cell} and if $ \theta (x) = \delta_n$, we call $x$ a $\delta$-\emph{cell}. 
\end{definition}

\begin{figure}
\begin{center}
\includegraphics[scale=0.8]{"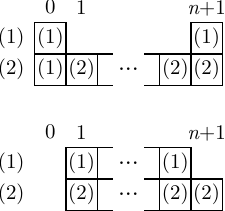"}
\caption{Operation of rules $\gamma_n$ and $\delta_n$, top to bottom respectively. The cells marked $(1)$ are added together on the first track and the ones marked $(2)$ on the second track.} \label{fig_fg_myhill} 
\end{center}
\end{figure}

\begin{lemma} \label{npreinj}
Let $n\geq 1$ and $\Sigma = \Z_2 \times \Z_2$. Let $\mathcal{R} = \{\gamma_n, \delta_n \}$ and $\theta \in \mathcal{R}^\Z$ a rule distribution with pattern $\delta_n(\gamma_n)^n \delta_n$. Then there are asymptotic configurations $c,e\in \Sigma^\Z$ such that $c \neq e$ and $H_\theta (c) = H_\theta (e)$.
\end{lemma}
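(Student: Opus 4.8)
The first step is to \textbf{reduce the statement to finding one nontrivial configuration in the kernel of} $H_\theta$. Each of the local rules $\gamma_n,\delta_n$ is defined, on each of its two tracks, by a $\Z_2$-linear expression in the coordinates of its arguments with no constant term; hence every $\theta(\bar x)$ is a $\Z_2$-linear map $\Sigma^{n+2}\to\Sigma$, and so $H_\theta:\Sigma^\Z\to\Sigma^\Z$ is $\Z_2$-linear, sends the zero configuration to the zero configuration, and satisfies $H_\theta(a\oplus b)=H_\theta(a)\oplus H_\theta(b)$, where $\oplus$ denotes cellwise addition in $\Sigma=\Z_2\times\Z_2$. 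Consequently it suffices to exhibit a single configuration $d\in\Sigma^\Z$ that is finitely supported, not identically $0$, and has $H_\theta(d)$ equal to the zero configuration: then $c$ equal to the zero configuration and $e=d$ are asymptotic, distinct, and have the same image.

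Next I \textbf{place the support of $d$ at a single cell}. Since the neighbourhood is $(0,1,\dots,n+1)$, a cell $z'$ has $z\in N(z')$ exactly when $z-n-1\le z'\le z$; thus if $d$ is supported on one cell $z$, then $H_\theta(d)$ can be nonzero only at the $n+2$ consecutive cells $z-n-1,\dots,z$, and is automatically $0$ elsewhere by linearity. I take $z$ to be the right endpoint of an occurrence of the block $\delta_n(\gamma_n)^n\delta_n$ in $\theta$, so that these $n+2$ cells are precisely a $\delta$-cell at $z-n-1$, then $n$ consecutive $\gamma$-cells, then a $\delta$-cell at $z$; and I set $d(z)$ to a suitable nonzero value (for the rules as defined, $d(z)=(0,1)$), with $d$ equal to $0$ everywhere else.

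It then remains to \textbf{check that $H_\theta(d)$ vanishes at each of these $n+2$ cells}, which is the dual of the orphan count in Lemma~\ref{nsurj}. For each of the $n$ interior $\gamma$-cells $z'$, the cell $z$ lies in one of the inner neighbour slots $a_1,\dots,a_n$ of $z'$: the first-track output of $\gamma_n$ reads only the end slots $a_0,a_{n+1}$ and hence is $0$, while the second-track output of $\gamma_n$ is a sum over slots $a_1,\dots,a_{n+1}$ of a single coordinate, which is $0$ because that coordinate of $d(z)$ was chosen to be $0$. For the $\delta$-cell at $z-n-1$ the cell $z$ occupies the extreme slot $a_{n+1}$, and for the $\delta$-cell at $z$ it occupies the extreme slot $a_0$; in both cases the second-track output of $\delta_n$ is a sum over the inner slots $a_1,\dots,a_n$ only, so it does not see $z$, and the first-track output of $\delta_n$ (a combination of the end slots) vanishes as well, since $d(z)$ was chosen to be annihilated by it. Hence $H_\theta(d)$ is the zero configuration, $d$ is finitely supported, and $d(z)\neq 0$, which proves the lemma. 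The only genuinely computational point is this last coordinate bookkeeping; the structural reason it succeeds is that the block carries a $\delta$-cell shielding each end of the run of $\gamma$-cells, and there is no obstacle beyond choosing $d(z)$ so that the $\gamma_n$ second-track sum and the $\delta_n$ first-track rule both kill it.
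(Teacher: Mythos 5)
Your overall strategy is exactly the paper's: reduce by $\Z_2$-linearity to exhibiting a nonzero finitely supported configuration in the kernel, support it on the single cell $z$ at the right end of an occurrence of $\delta_n(\gamma_n)^n\delta_n$, and check the $n+2$ cells $z-n-1,\dots,z$ that see $z$. However, the concrete choice $d(z)=(0,1)$ is wrong, and the verification rests on misstatements of the rules. As the rules are used throughout the paper (in the proof of Lemma~\ref{npreinj} itself and in the surjectivity construction), the second-track output of \emph{both} $\gamma_n$ and $\delta_n$ at a cell $y$ is $\bigoplus_{i=1}^{n+1} e(y+i)^{(2)}$, i.e.\ it sums the second coordinates over slots $a_1,\dots,a_{n+1}$, while $\delta_n^{(1)}=\bigoplus_{i=1}^{n}a_i^{(1)}$. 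So with $d(z)^{(2)}=1$ the left $\delta$-cell at $z-n-1$ and every one of the $n$ $\gamma$-cells at $z-n,\dots,z-1$ has the perturbed cell inside its track-2 window and outputs $1$ on track~2; hence $H_\theta(d)\neq 0$ and your configurations do not have equal images. Your claims that the $\delta_n$ second-track output ranges only over the inner slots, that $\delta_n^{(1)}$ is a combination of the end slots, and that the coordinate of $d(z)$ entering the $\gamma_n$ second-track sum was chosen to be $0$ (it is precisely the coordinate you set to $1$) are all incorrect for these rules. (The paper's displayed definition is admittedly garbled — it defines $\delta_n^{(2)}$ twice and $\delta_n^{(1)}$ never — but the intended rules are pinned down by the paper's own computations, and under them your check fails.)

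The fix is small and recovers the paper's proof: the coordinate of $z$ that no cell in the block reads is the \emph{first}-track one. Track $1$ of $z$ is read only by $\delta$-cells at positions $z-n,\dots,z-1$ (through $\bigoplus_{i=1}^{n}a_i^{(1)}$) and by $\gamma$-cells at positions $z-n-1$ and $z$ (through $a_0^{(1)}\oplus a_{n+1}^{(1)}\oplus a_0^{(2)}$); in the block the former positions are all $\gamma$-cells and the latter are both $\delta$-cells, so nothing sees it. Taking $d(z)=(1,0)$ — the paper's $e(x+n)=(1,0)$ — your argument then goes through verbatim, and your structural remark about the two $\delta$-cells shielding the run of $\gamma$-cells is the right explanation, just for the first track rather than the second.
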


\begin{proof}
Assume the length $n$ block of $\gamma$-cells is in segment $[x,x+n-1]$. Let $c,e\in \Sigma^\Z$ be such that $c(y) = (0,0)$ for all $y \in \Z$ and $e(x+n) = (1,0)$, $e(y') = (0,0)$ for all $y'\in \Z$, $y'\neq x+n$. Clearly $c$ and $e$ are asymptotic and $c\neq e$ and since the local rules in $\theta$ only add modulo 2, $H_\theta (c) = c$. 

Next we show that $H_\theta (e) = c$. For any cell $y \in \Z$ that doesn't see cell $x+n$, clearly $H_\theta (e) (y) = (0,0)$. Consider then cells that see $x+n$. Because both $\gamma$- and $\delta$-cells only see $n+1$ cells to their right and none to their left, the only cells that can see $x+n$ are in the segment $[x-1, x+n]$. For any $y \in [x,x+n-1]$,
\begin{align*}
H_\theta (e)(y)^{(1)} &= e(y)^{(1)} \oplus e(y+n)^{(1)} \oplus e(y)^{(2)} \\
&= 0 \oplus 0 \oplus 0 = 0, \\
H_\theta (e)(y)^{(2)} &= \bigoplus_{i=1}^{n+1} e(y+i)^{(2)} = 0 \oplus \ldots \oplus 0 = 0,
\end{align*}
Because none of the cells sees the first track component of $x+n$. For cells $x-1$ and $x+n$, we have
\begin{align*}
H_\theta (e)(x-1)^{(1)} &= \bigoplus_{i=1}^{n} e(x-1+i)^{(1)} = 0 \oplus \ldots \oplus 0 = 0 \\
&=  \bigoplus_{i=1}^{n} e(x+n+i)^{(1)} = H_\theta (e)(x+n)^{(1)},\\
H_\theta (e)(x-1)^{(2)} &= \bigoplus_{i=1}^{n+1} e(x-1+i)^{(2)} = 0 \oplus \ldots \oplus 0 = 0 \\
&=  \bigoplus_{i=1}^{n+1} e(x+n+i)^{(2)} = H_\theta (e)(x+n)^{(2)},
\end{align*}
because neither sees the first track component of $x+n$ either. Hence for all $y\in [x-1, x+n]$, $H_\theta(y) = (0,0).$ Therefore $H_\theta (e) = c = H_\theta (c)$. This argument is illustrated in Figure \ref{fig_npreinj}.
\qed

\begin{figure}
\begin{center}
\includegraphics[scale=0.8]{"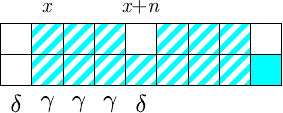"}
\caption{The cells seen by cells in the pattern $\delta_n(\gamma_n)^n \delta_n$ in the case where $n=3$. Cells seen by only $\gamma$-cells are highlighted in solid colour and cells seen by both $\delta$- and $\gamma$-cells are highlighted in striped colour.} \label{fig_npreinj}
\end{center}
\end{figure}

\end{proof}

\begin{lemma} \label{surj}
Let $n\geq 1$ and $\Sigma = \Z_2 \times \Z_2$. Let $\mathcal{R} = \{\gamma_n, \delta_n \}$ and $\theta \in \mathcal{R}^\Z$ a rule distribution with at most one length $n$ block of adjacent $\gamma$-cells. The global update rule $H_\theta$ is surjective.
\end{lemma}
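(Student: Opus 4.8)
The plan is to reduce to finite windows via Lemma~\ref{orphan}, exploit that $H_\theta$ is $\Z_2$-linear, and then locate the obstruction to surjectivity in a way that is exactly dual to the obstruction to pre-injectivity analysed in Lemma~\ref{preinj}. Since $\gamma_n$ and $\delta_n$ are $\Z_2$-linear, each partial rule $H_{\theta|C}$ over a segment $C=[i,j]$ is a $\Z_2$-linear map $\Sigma^{[i,j+n+1]}\to\Sigma^{[i,j]}$, so by Lemma~\ref{orphan} it suffices to show that for every $C$ this map has full row rank, i.e.\ that no nonzero $\Z_2$-combination of its output coordinates is identically zero. The update is triangular in the two tracks: the second track depends only on second-track inputs and is governed by the single rule $a\mapsto\bigoplus_{i'=1}^{n+1}a_{i'}^{(2)}$ (the same for $\gamma$- and $\delta$-cells), while the first-track output of a $\gamma$-cell additionally reads that cell's own second-track bit. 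Writing a hypothetical cokernel (left-null) vector of the matrix of $H_{\theta|C}$ as coefficients $(\lambda_x)_{x\in[i,j]}$ on the first-track rows and $(\mu_x)_{x\in[i,j]}$ on the second-track rows, the first-track-input columns force $\sum_x\lambda_xF_x=0$, where $F_x$ is the indicator of $\{x,x+n+1\}$ for a $\gamma$-cell and of $\{x+1,\dots,x+n\}$ for a $\delta$-cell, and the second-track-input columns force $\sum_x\lambda_xB_x=\sum_x\mu_xG_x$, where $B_x=\mathbf 1_x$ for a $\gamma$-cell and $0$ for a $\delta$-cell and $G_x$ is the indicator of $\{x+1,\dots,x+n+1\}$.

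The easy half of the argument is that the second-track matrix (rows $G_x$, $x\in[i,j]$) is in echelon form, its row $x$ having leftmost entry in column $x+1$, hence full rank; so if $\lambda=0$ then $\mu=0$, and it remains only to exclude a nonzero $\lambda$ with $\sum_x\lambda_xF_x=0$ and $\sum_x\lambda_xB_x$ in the span of $\{G_x:x\in[i,j]\}$. I would analyse $\sum_x\lambda_xF_x=0$ column by column, dually to the proof of Lemma~\ref{preinj}. Let $x_0$ and $x_1$ be the largest and smallest indices with $\lambda_x\neq 0$. The columns $x_0+n+1$ and then $x_0+n,x_0+n-1,\dots$ force $x_0$ to be a $\delta$-cell and then $x_0-1,\dots,x_0-n$ all to be $\gamma$-cells — unless some $x_0-k$ with $k\le n$ falls outside $[i,j]$, which produces an immediate contradiction — and symmetrically the columns $x_1,x_1+1,\dots,x_1+n$ force $x_1$ to be a $\delta$-cell and $x_1+1,\dots,x_1+n$ all to be $\gamma$-cells, while an extreme column rules out $x_0=x_1$. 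Thus a nonzero $\lambda$ yields length-$n$ runs of $\gamma$-cells just after $x_1$ and just before $x_0$; since $\theta$ has at most one length-$n$ $\gamma$-block, these runs can neither be disjoint nor merge into a longer run, so they coincide, whence $x_0=x_1+n+1$, the cells $[x_1,x_0]$ carry the pattern $\delta_n(\gamma_n)^n\delta_n$, and $\lambda$ is the all-ones vector on $[x_1,x_0]$. For this $\lambda$ one computes $\sum_x\lambda_xB_x=\mathbf 1_{x_1+1}+\dots+\mathbf 1_{x_1+n}=G_{x_1}+\mathbf 1_{x_1+n+1}$, so it lies in the span of $\{G_x\}$ only if $\mathbf 1_{x_1+n+1}$ does; but a nonzero combination $\sum_x\mu_xG_x$ has least support element (least index with $\mu_x\neq 0$) $+1$ and greatest support element (greatest such index) $+\,n+1$, hence support of size at least $n+1\geq 2$, so it is never a single indicator. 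This contradiction shows no such $\lambda$ exists, so every $H_{\theta|C}$ has full row rank and $H_\theta$ is surjective.

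The step I expect to be the crux is the column-by-column forcing: it must be carried out carefully enough that the finiteness of $C$ never truncates it prematurely — the inequalities $x_0\ge i+n$ and $x_1\le j-n$ that make the forced $\gamma$-runs lie inside $[i,j]$ have to be read off from the extreme-column equations themselves — and it must be interleaved with the second-track bookkeeping to pin down exactly which cokernel vector is attached to the single permitted $\delta_n(\gamma_n)^n\delta_n$ pattern and to check that the coupling term $a_0^{(2)}$ in $\gamma_n$'s first-track rule is precisely what keeps that vector from extending. That last point is the structural reason a distribution with one length-$n$ $\gamma$-block still gives a surjective NUCA even though, by Lemma~\ref{npreinj}, it is not pre-injective.
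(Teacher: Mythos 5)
Your argument is correct, but it is a genuinely different proof from the one in the paper. The paper proves the lemma by explicitly constructing a pre-image of an arbitrary configuration $c$: it seeds the construction on a segment around a longest $\gamma$-block and then extends $e$ recursively to the right and to the left, defining at each step one new "fixing" cell per track (and treating each maximal $\gamma$-block as a unit) so that every output value is realized; no reduction to finite windows is needed. You instead go through Lemma~\ref{orphan}, use the $\Z_2$-linearity of $\gamma_n$ and $\delta_n$ to reduce surjectivity of each finite-window map to the vanishing of its left kernel, and then run a column-by-column forcing argument (dual to the analysis in Lemma~\ref{preinj}) showing that a nonzero cokernel vector would force a $\delta_n(\gamma_n)^n\delta_n$ pattern with the all-ones functional attached, which the second-track coupling term $a_0^{(2)}$ then rules out because $\sum_x\lambda_x B_x$ is not in the (echelon, full-rank) span of the $G_x$. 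I checked the forcing: at column $x_0+n-k$ the only possible contributors are the $\delta$-cell $x_0$ and a $\gamma$-cell at $x_0-k-1$, and symmetrically from $x_1$, so your claims (including the out-of-window and $x_0=x_1$ degeneracies, and the fact that overlapping or abutting forced runs either collide with a forced $\delta$-cell or create a second length-$n$ block) all hold, and the final support-size argument is sound. What your route buys is a conceptual explanation of why the construction works at all — it isolates the unique linear relation that would obstruct surjectivity and shows the track-2 term is exactly what breaks it, nicely dual to Lemmas~\ref{preinj} and~\ref{npreinj} — at the cost of relying on linearity and on the compactness baked into Lemma~\ref{orphan}; the paper's construction is heavier on index bookkeeping but produces pre-images explicitly and does not need the finite-window reduction.
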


\begin{proof}
Let $c \in \Sigma^\Z$. We show that there is $e\in \Sigma^\Z$ such that $H_\theta (e) = c$. Let $k\leq n$ be the length of the longest block of $\gamma$-cells in $\theta$. If there are no $\gamma$-cells, it is easy to see that $H_\theta$ is surjective, 
so assume $k \geq 1$. For the sake of clarity we assume a block of $\gamma$-cells is in the segment $[1,k]$. This does not effect the result because the defined configuration can be shifted to match the distribution. Then let
\begin{align*}
e(1)^{(1)} &= e(2)^{(1)} = \ldots = e(n-1)^{(1)} = e(n+1)^{(1)} \\
&= e(2)^{(2)} = \ldots = e(n-1)^{(2)} = 0,
\end{align*}
and 
\begin{align*}
&e(n)^{(1)} = c(0)^{(1)},\\ 
&e(n+i+1)^{(1)} = c(i)^{(1)}, \text{ when } i\in \{2,\ldots, k-1 \},\\
&e(n+k+1)^{(1)} = e(k)^{(1)} \oplus c(k)^{(1)}\\
&e(n+2)^{(1)} = \begin{cases}
&c(k+1)^{(1)} \oplus (\bigoplus_{j=n+3}^{n+k+1} e(j)^{(1)}), \text{ if } k\geq n-1 \\
&c(k+1)^{(1)} \oplus e(n)^{(1)} \oplus (\bigoplus_{j=n+3}^{n+k+1} e(j)^{(1)}),\text{ otherwise,}
\end{cases}\\
&e(1)^{(2)} = e(n+2)^{(1)} \oplus c(1)^{(1)}, \\
&e(n+i)^{(2)} =(\bigoplus_{j=i+1}^{n+i} e(j)^{(2)}) \oplus c(i)^{(2)}, \text{ when } i \in \{1, \ldots k+2\}. \\
\end{align*}
This construction is pictured in Figures \ref{fig_surj4} and \ref{fig_surj1}.

\begin{figure}
\begin{center}
\includegraphics[scale=0.8]{"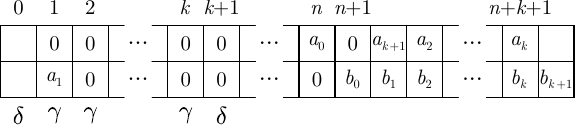"}
\caption{Beginning the construction of the pre-image. The indices of cells are annotated above. Cells marked $a_i$ and $b_i$ are cells reserved for fixing the state of $H_\theta (i)$ on track 1 and 2 respectively. The rules assigned to each cell are annotated on the bottom.} \label{fig_surj4}
\end{center}
\end{figure}

\begin{figure}
\begin{center}
\includegraphics[scale=0.8]{"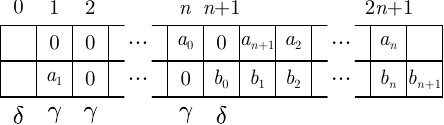"}
\caption{Beginning the construction of the pre-image in the case that $k=n$.} \label{fig_surj1}
\end{center}
\end{figure}

Now $H_\theta (e) (x) = c(x)$ when $x \in [0,k+1]$. For each cell $x^{(i)}$ there's a reserved fixing cell whose state is the sum of $c(x)^{(i)}$ and the states of all the other cells seen by $x^{(i)}$, modulo 2. Then because $H_\theta (e)(x)^{(i)}$ is the modulo 2 sum of the cells seen by $c(x)^{(i)}$, clearly $H_\theta (e)(x)^{(i)} = c(x)^{(i)}$. These states are also well defined, because their definitions only reference already defined states.

Next we recursively define the rest of configuration $e$. First, let $x > k+1$ and assume that $e(x)^{(2)}, e(x+1)^{(2)}, \ldots, e(x+n)^{(2)}$ have already been defined. Then let 
\begin{align*}
e(x+n+1)^{(2)} &= (\bigoplus_{j=1}^{n} e(x+j)^{(2)}) \oplus c(x)^{(2)},
\end{align*}
meaning $H_\theta (e)(x)^{(2)} = c(x)^{(2)}$. Now $e(x)^{(2)}$ is inductively defined for all $x > k+1$. 


Assume then that $e(x)^{(1)}, e(x+1)^{(1)}, \ldots, e(x+n-1)^{(1)}$ have already been defined. If $\theta (x) = \delta_n$ and $\theta(x-1) = \delta_n$, let
\begin{align*}
e(x+n)^{(1)} &= (\bigoplus_{j=1}^{n-1} e(x+j)^{(1)}) \oplus c(x)^{(1)},
\end{align*}
meaning $H_\theta (e)(x) = c(x)$. If $\theta (x) = \gamma_n$ and $\theta(x-1) = \delta_n$, then $x$ is the leftmost cell in a length $m$ block of $\gamma$-cells, where $1\leq m \leq k$. Then let
\begin{align*}
e(x+i+n+1)^{(1)} = \: &e(x+i)^{(1)} \oplus e(x+i)^{(2)} \oplus c(x+i)^{(1)} \\
&\text{when } i \in \{0,\ldots, m-1 \}, \\
e(x+n)^{(1)} = \: &(\bigoplus_{j=m+1}^{n-1} e(x+j)^{(1)}) \\
& \oplus (\bigoplus_{j=n+2}^{n+m} e(x+j)^{(1)}) \oplus c(x+m)^{(1)}.
\end{align*}
 This is illustrated in Figure \ref{fig_surj2}. These states are indeed well defined. The states $e(x), \ldots, e(x+m-1)$ are defined by assumption. Using this, states $e(x+n-1), \ldots, e(x+n+m)$ can be defined recursively. Then every cell seen by $x+m$ except the first track component of $x+n-1$ has already been defined, because the furthest right cell $x+m$ can see the first track component of is $x+n+m$. 

Now $H_\theta (e) (x+i)^{(1)} = c(x+i)^{(1)}$ for all $i \in \{0,\ldots, m \}$. Note that this also covers all cases in which $\theta(x-1) = \gamma_n$, because the entire block of $\gamma$-cells was handled at once. Then by induction $H_\theta (e)(x) = c(x)$ for all $x>k+1$.


\begin{figure}
\begin{center}
\includegraphics[scale=0.8]{"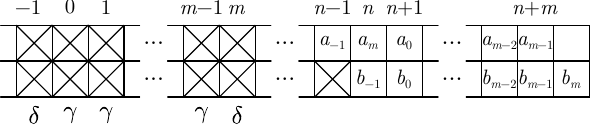"}
\caption{Construction of the pre-image going to the right. The relative indices of cells are annotated above. Cells marked $a_i$ and $b_i$ are cells reserved for fixing the state of $H_\theta (i)$ on track 1 and 2 respectively. Crossed out cells are cells whose state has already been defined in $e$.} \label{fig_surj2}
\end{center}
\end{figure}

Finally, let $x < 0$. First assume that $e(x+2)^{(2)}, e(x+3)^{(2)}, \ldots, e(x+n+1)^{(2)}$ have already been defined. Let  
\begin{align*}
e(x+1)^{(2)} = c(x)^{(2)} \oplus (\bigoplus_{j=2}^{n+1} e(x+j)^{(2)}),
\end{align*}
meaning $H_\theta (e) (x)^{(2)} = c(x)^{(2)}$. Now $e(x)^{(2)}$ is inductively defined for all $x < 0$. Assume then that $e(x+2)^{(1)}, e(x+3)^{(1)}, \ldots, e(x+n+1)^{(1)}$ have been defined. If $\theta(x) = \delta_n$ and $\theta (x+1) = \delta_n$, let
\begin{align*}
e(x+1)^{(1)} = c(x)^{(1)} \oplus (\bigoplus_{j=2}^{n} e(x+j)^{(1)}),
\end{align*}
meaning $H_\theta (e) (x)^{(1)} = c(x)^{(1)}$. If $\theta (x) = \gamma_n$ and $\theta (x+1) = \delta_n$, then $x$ is the rightmost cell in a length $m$ block of $\gamma$-cells, where $1 \leq m \leq k$. Then let
\begin{align*}
e(x-i)^{(1)} =\: &c(x-i)^{(1)} \oplus e(x-i+n+1)^{(1)} \oplus e(x-i)^{(2)} \\
&\text{when } i \in \{0, \ldots, m-1 \}, \\
e(x+1)^{(1)} =\: &(\bigoplus_{j=0}^{m-1} e(x-i)^{(1)})  \\
&\oplus (\bigoplus_{j=2}^{n-m} e(x+i)^{(1)}) \oplus c(x-m)^{(1)}.
\end{align*} 
This is illustrated in \ref{fig_surj3}. These states are well defined: for each $i \in \{0, \ldots, m-1 \}$, $e(x-i)^{(2)}$ was defined earlier and $e(x-i+n+1)^{(1)}$ is defined by assumption since $m<n$. Then every cell seen by $x-m$ has been defined except for the first track component of $x+1$. 

Now $H_\theta (e) (x-i)^{(1)} = c(x-i)^{(1)}$ for all $i \in \{0, \ldots, m \}$. Note that this also covers the cases when $\theta(x+1) = \gamma_n$ because the entire block of $\gamma$-cells was handled at once. Then by induction $H(e)(x) = c(x)$ for all $x < 0$. 

\begin{figure}
\begin{center}
\includegraphics[scale=0.8]{"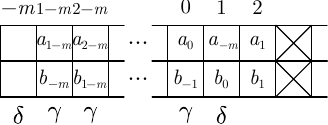"}
\caption{Construction of the pre-image going to the left.} \label{fig_surj3}
\end{center}
\end{figure}

Now $H_\theta (e)(x) = c(x)$ for all $x \in \Z$. Hence every $c \in \Sigma^\Z$ has a pre-image, meaning $H_\theta$ is surjective.
\qed
\end{proof}

\subsection{The general case}

Next, we reduce the general case of a non-recurrent rule distribution template to the special case from before. 


\begin{definition}
Let $T$ be a set of rule templates and $\tau \in T^\Z$ be a non-recurrent rule distribution template. Let $n\geq 1$ be such that there is a pattern $t_1 \ldots t_n$ in $\tau$ that appears only once. For every $\tau$ we treat one such unique pattern as fixed. Let $f$ and $g$ be local rules with state set $\Sigma$ and neighbourhood $N=(-r,\ldots,r)$ for some $r \in \Z_+$, $r\geq n$. 

The rule set $\mathcal{R}_{\tau,f,g}$ is such that for each $t\in T$ there is $h_t\in \mathcal{R}_{\tau,f,g}$ with state set $\Z_n \times \Sigma$ and neighbourhood $N$ which maps
\begin{align*}
((m_{-r}, a_{-r}),\ldots, (m_{r},a_{r})) \mapsto 
(m_0,f(a_{-r},\ldots, a_{r}))
\end{align*}
if $t=t_{m_0}$ and if in segment $(m_{-n},\ldots, m_n)$ there is a length $n+2$ sub-segment $(a,1,\ldots,n,b)$, where $a\neq n$ and $b\neq 1$. Otherwise $h_t$ maps
\begin{align*}
((m_{-r}, a_{-r}),\ldots, (m_{r},a_{r})) \mapsto 
(m_0,g(a_{-r},\ldots, a_{r})),
\end{align*}
for all $(m_i,a_i) \in \Z_n \times \Sigma$.
\end{definition}

Let's first clarify the function of the rule set $\mathcal{R}_{\tau,f,g}$. Configurations in $(\Z_n \times \Sigma)^\Z$ consist of two tracks, a static \emph{background track}, and an \emph{action track} that the local rules actually operate on. In the background track, the configuration makes a guess as to where in the word $t_1\ldots t_n$ the template in its cell is. Each rule in our assignment knows which template it is assigned to, so it can check if the guessed template is correct. If it is not, the rule $g$ is used on the action track. 

If the guess is correct, the rule further checks if the guess $m$ is part of the  sub-string $1\ldots n$ of a string $a1 \ldots n b$ on the background track, where $a\neq n$ and $b \neq 1$. If it is, the rule $f$ is used on the action track, and otherwise rule $g$ is used. Note that the local rule does not know whether the other guesses in this string are correct, only the guess in its own cell. An example is illustrated in Figure \ref{fig_guess}.

\begin{figure}
\begin{center}
\includegraphics[scale=0.8]{"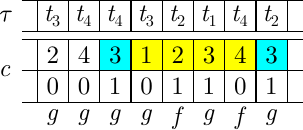"}
\caption{Example of the chosen rule set and assignment, where $n=4$. Which rules are used on the binary track are annoted on the bottom row. The string of guesses $1\ldots 4$ is highlighted. There are two correct guesses in this string and hence the rule $f$ is used in two cells.} \label{fig_guess}
\end{center}
\end{figure}

\begin{notation}
Let $\Sigma = A \times B$ for some state sets $A$ and $B$. For any $c \in \Sigma$, $c^{(1)} \in A^\Z$, $c^{(2)} \in B^\Z$ are such that $c^{(1)}(x) = c(x)^{(1)}$ and $c^{(2)}(x) = c(x)^{(2)}$.
\end{notation}


\begin{lemma} \label{gen}
Let $\tau \in T^\Z$ be a non-recurrent rule distribution template and $n\geq 1$ such that there is a length $n$ pattern $t_1 \ldots t_n$ in $\tau$ that appears only once. Let $\mathcal{R}_1 = \{f,g \}$ be a set of rules with alphabet $\Sigma$, $\mathcal{R}_2 = \mathcal{R}_{\tau,f,g}$ and $\alpha:T \rightarrow \mathcal{R}_2$ an assignment that maps $\alpha(t) = h_t$ for all $t \in T$. 

Suppose for all $\theta \in \mathcal{R}^\Z$ such that there is at most one length $n$ block of adjacent rules $f$ in $\theta$, the update rule $H_\theta$ is surjective or pre-injective. Then the update rule $H_{\tau_\alpha}$ is surjective or pre-injective, respectively.
\end{lemma}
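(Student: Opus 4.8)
The plan is to reduce the behavior of $H_{\tau_\alpha}$ on the two-track state set $\Z_n \times \Sigma$ to the behavior of an update rule $H_\theta$ over $\mathcal{R}_1 = \{f,g\}$ on the single track $\Sigma$, where the rule distribution $\theta$ records, for each cell, whether that cell actually uses $f$ or $g$ on the action track. The key observation is that whether a cell uses $f$ or $g$ depends \emph{only} on the background track, which is static under $H_{\tau_\alpha}$ (the local rules copy $m_0$ unchanged). So the first step is: fix the unique occurrence of $t_1\ldots t_n$ in $\tau$, and consider the particular background configuration $\beta \in \Z_n^{\Z}$ that places the correct guesses $1,2,\ldots,n$ exactly on that occurrence and arbitrary-but-incorrect guesses elsewhere (this is possible since $|T|$ and the templates are finite — one can always guess wrong outside the unique pattern, or even everywhere the correct run of guesses would otherwise be completed). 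For this $\beta$, the cell at position $x$ uses $f$ iff $\tau(x)=t_{\beta(x)}$ \emph{and} $\beta$ exhibits a sub-segment $a,1,\ldots,n,b$ with $a\neq n$, $b\neq 1$ straddling $x$; with $\beta$ chosen so the only length-$(n+2)$ window of the form $(a,1,\dots,n,b)$ with $a \ne n, b \ne 1$ sits on the unique occurrence of $t_1\ldots t_n$, exactly the $n$ cells of that occurrence use $f$, and all other cells use $g$. Call the resulting distribution $\theta \in \mathcal{R}_1^{\Z}$; it has exactly one length-$n$ block of $f$'s and no longer one, so by hypothesis $H_\theta$ is surjective (resp.\ pre-injective).

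The second step is to transfer this back. Define the restriction map $\pi$ sending a configuration $c \in (\Z_n\times\Sigma)^\Z$ with $c^{(1)}=\beta$ to $c^{(2)} \in \Sigma^\Z$. On the set $X_\beta = \{\, c : c^{(1)}=\beta \,\}$, which is $H_{\tau_\alpha}$-invariant (background is copied), we have $H_{\tau_\alpha}$ conjugate via $\pi$ to $H_\theta$: that is, $\pi\bigl(H_{\tau_\alpha}(c)\bigr) = H_\theta\bigl(\pi(c)\bigr)$ for all $c \in X_\beta$, directly from the definitions of $h_t$ and of $\theta$. For the surjective case: given any target $d \in (\Z_n\times\Sigma)^\Z$, the background component $d^{(1)}$ is copied from the source, so any preimage must have background $d^{(1)}$; it therefore suffices to find preimages within each invariant slice $X_{\beta'}$. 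But the \emph{action-track} dynamics within $X_{\beta'}$ is governed by an $H_{\theta'}$ where $\theta' \in \mathcal{R}_1^\Z$ is read off from $\beta'$ exactly as above, and — here is the point — for \emph{any} background $\beta'$, the resulting $\theta'$ has at most one length-$n$ block of adjacent $f$'s, because a block of $n$ consecutive $f$-cells requires $n$ consecutive cells all making correct guesses forming the run $1,\ldots,n$, with correct templates, hence an occurrence of $t_1\ldots t_n$ in $\tau$ — and there is only one. So $H_{\theta'}$ is surjective by hypothesis, giving the needed preimage of $d^{(2)}$ inside $X_{\beta'}$, and $H_{\tau_\alpha}$ is surjective.

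For the pre-injective case: suppose $c, e \in (\Z_n\times\Sigma)^\Z$ are asymptotic, $c \neq e$, and $H_{\tau_\alpha}(c) = H_{\tau_\alpha}(e)$. Applying $H_{\tau_\alpha}$ copies the background, so $c^{(1)} = H_{\tau_\alpha}(c)^{(1)} = H_{\tau_\alpha}(e)^{(1)} = e^{(1)}$; call this common background $\beta'$. Then $c,e$ both lie in $X_{\beta'}$, so $c^{(2)}, e^{(2)}$ are asymptotic, distinct, and $H_{\theta'}(c^{(2)}) = H_{\theta'}(e^{(2)})$, contradicting pre-injectivity of $H_{\theta'}$ — which holds by hypothesis since, as above, $\theta'$ has at most one length-$n$ block of $f$'s for every background $\beta'$. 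Hence $H_{\tau_\alpha}$ is pre-injective.

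The main obstacle, and the step requiring the most care, is the claim that for \emph{every} background configuration $\beta'$ — not just the hand-picked $\beta$ — the induced distribution $\theta'$ has at most one length-$n$ run of $f$-cells. This is where the definition of $h_t$ (checking both that the guess $m_0$ is correct, $\tau(x)=t_{m_0}$, \emph{and} that it sits in a window $a,1,\ldots,n,b$ with $a\neq n$, $b\neq 1$) does its work: $n$ consecutive $f$-cells force $n$ consecutive correct guesses spelling $1,\ldots,n$ at positions whose templates are $t_1,\ldots,t_n$, i.e.\ a genuine occurrence of the unique pattern $t_1\ldots t_n$ in $\tau$; two such runs would give two occurrences. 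One must also check the boundary conditions $a \neq n$, $b \neq 1$ cannot be used to sneak in an extra block — but any length-$(n+1)$ run of $f$-cells would already contain, internally, a forbidden configuration or force a second occurrence of the pattern, so length-$n$ runs are genuinely the maximum and there is at most one. Verifying the conjugacy identity $\pi \circ H_{\tau_\alpha} = H_{\theta'} \circ \pi$ on $X_{\beta'}$ is then a routine unwinding of definitions.
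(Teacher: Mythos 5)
Your proof is correct and takes essentially the same route as the paper's: since the background track is static, each background slice carries an induced distribution $\theta' \in \{f,g\}^\Z$ with at most one length-$n$ block of adjacent $f$-cells (such a block forces correct guesses spelling $1,\ldots,n$, hence the unique occurrence of $t_1\ldots t_n$ in $\tau$), and surjectivity respectively pre-injectivity then transfer slice-wise exactly as in the paper. The hand-picked background $\beta$ in your first paragraph is not needed for this lemma (it only matters for the later non-surjectivity/non-pre-injectivity constructions), but it does no harm.
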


\begin{proof}
Let  $B(c) = \set{e \in (\Z_n \times \Sigma)^\Z}{e^{(1)} = c^{(1)}}$ for any $c\in (\Z_n \times \Sigma)^\Z$ be the set of configurations that share the background track of $c$. For all $e \in B(c)$, rules $f$ and $g$ are used in the same cells, meaning there is a $\theta \in \mathcal{R}_1$ such that $H_{\tau_\alpha} (e)^{(2)} = H_\theta (e^{(2)})$ for all $e \in B(c)$. 

Furthermore, because pattern $t_1 \ldots t_n$ appears in $\tau$ only once, the pattern $h_{t_1}\ldots h_{t_n}$ appears only once in $\tau_\alpha$. Then for any $e \in B(c)$, there is at most one length $n$ block of cells where rule $f$ is used, because there is at most one length $n$ block of correct guesses for the templates in $e$ that are also a part of the substring $1\ldots n$ of $a 1 \ldots n b$, where $a \neq n$ and $b \neq 1$, in the background track. Therefore $\theta$ has at most one block of adjacent rules $f$.

Suppose then that for every such $\theta$, $H_\theta$ is surjective. Let $c_1 \in (\Z_n \times \Sigma)^\Z$ and $\theta \in \mathcal{R}_1$ such that $H_{\tau_\alpha} (e)^{(2)} = H_\theta (e^{(2)})$ for all $e \in B(c_1)$. Then because $H_\theta$ is surjective, there is $c_2 \in \Sigma^\Z$ such that $H_\theta (c_2) = c_1^{(2)}$. Let then $c_3 \in (\Z_n \times \Sigma)^\Z$ such that $c_3^{(1)} = c_1^{(1)}$ and $c_3^{(2)} = c_2^{(2)}$. Then $c_3 \in B(c_1)$ and hence $H_{\tau_\alpha} (c_3) = c_1$. Therefore $H_{\tau_\alpha}$ is surjective.

Suppose then that for every such $\theta$, $H_\theta$ is pre-injective. Let $c_1,c_2 \in (\Z_n \times \Sigma)^\Z$  be asymptotic configurations with $c_1 \neq c_2$. If $c_1^{(1)} \neq c_2^{(1)}$ then clearly $H_{\tau_\alpha} (c_1) \neq H_{\tau_\alpha} (c_2)$ because the background track is static. Assume then that $c_1^{(1)} = c_2^{(1)}$, meaning $c_1^{(2)} \neq c_2^{(2)}$. Then $B(c_1) = B(c_2)$, meaning there is $\theta \in \mathcal{R}_1^\Z$ such that $H_{\tau_\alpha} (c_1)^{(2)} = H_\theta (c_1^{(2)})$ and $H_{\tau_\alpha} (c_2)^{(2)} = H_\theta (c_2^{(2)})$. Now because $H_\theta$ is pre-injective, $H_{\tau_\alpha} (c_1)^{(2)} = H_\theta (c_1^{(2)}) \neq  H_\theta (c_2^{(2)}) = H_{\tau_\alpha} (c_2)^{(2)}$  and hence $H_{\tau_\alpha} (c_1) \neq H_{\tau_\alpha} (c_2)$. Therefore $H_{\tau_\alpha}$ is pre-injective.
\qed
\end{proof}

\begin{theorem}
Let $T$ be a set of rule templates and $\tau \in T^\Z$ be a non-recurrent rule distribution template. There exists a set of local rules $\mathcal{R}$ and assignment $\alpha: T \rightarrow \mathcal{R}$ such that $H_{\tau_\alpha}$ is pre-injective, but not surjective.
\end{theorem}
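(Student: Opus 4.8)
The plan is to apply the reduction of Lemma~\ref{gen} with the specific rules $f_n, g_n$ of the Moore subsection: then pre-injectivity of $H_{\tau_\alpha}$ is handed to us directly by Lemma~\ref{preinj}, while a configuration with no pre-image will be produced by combining a carefully chosen background track with Lemma~\ref{nsurj}.

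Since $\tau$ is non-recurrent, fix $n \geq 1$ and a length-$n$ pattern $t_1 \ldots t_n$ of $\tau$ that occurs exactly once, as in the definition preceding Lemma~\ref{gen}. Put $f = f_n$ and $g = g_n$ (so the action alphabet is $\Z_2$, the neighbourhood is $(-n,\ldots,n)$, and we may take $r = n$), let $\mathcal{R} = \mathcal{R}_{\tau, f_n, g_n}$, and let $\alpha(t) = h_t$ for all $t \in T$. By Lemma~\ref{preinj}, every $\theta \in \{f_n, g_n\}^\Z$ in which at most one length-$n$ block of adjacent cells uses $f_n$ has $H_\theta$ pre-injective; this is exactly the hypothesis required by Lemma~\ref{gen}, so $H_{\tau_\alpha}$ is pre-injective.

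For non-surjectivity I would fix a background track $\beta \in \Z_n^\Z$ tailored to the unique occurrence of $t_1 \ldots t_n$, placed say at positions $[p+1,p+n]$: set $\beta(p+i) = i$ for $i \in [1,n]$, set $\beta(p) = a$ and $\beta(p+n+1) = b'$ for some $a \neq n$ and $b' \neq 1$, and set $\beta$ constant on the remaining cells so that the word $a\,1\,2\,\ldots\,n\,b'$, and in particular the run $1\,2\,\ldots\,n$, does not reappear anywhere along $\beta$. Let $\theta \in \{f_n,g_n\}^\Z$ be the rule distribution that any configuration with background track $\beta$ induces on the action track. I would then verify that each cell of $[p+1,p+n]$ uses $f_n$ (its own guess lies inside the sub-segment $a\,1\,\ldots\,n\,b'$ occurring inside its window, and its template matches the guess), whereas cell $p$ uses $g_n$ (the window centred at $p$ extends only to $p+n$, so it contains the run $1\,\ldots\,n$ with no symbol after it, and nowhere else by the choice of $\beta$); hence $\theta$ contains the pattern $g_n(f_n)^n$. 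By Lemma~\ref{nsurj} there is $c^{(2)} \in \Z_2^\Z$ with no $H_\theta$-pre-image. Take $c \in (\Z_n \times \Z_2)^\Z$ with $c^{(1)} = \beta$ and action track $c^{(2)}$. Any $e$ with $H_{\tau_\alpha}(e) = c$ must agree with $c$ on the background track, since that track is static, so $H_{\tau_\alpha}(e)^{(2)} = H_\theta(e^{(2)})$ by the identity established in the proof of Lemma~\ref{gen}, forcing $H_\theta(e^{(2)}) = c^{(2)}$ — a contradiction. Thus $c$ has no $H_{\tau_\alpha}$-pre-image and $H_{\tau_\alpha}$ is not surjective.

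The step I expect to be the main obstacle is the last verification: that the chosen $\beta$ forces precisely the $n$ cells of the unique occurrence of $t_1 \ldots t_n$ to use $f_n$ and the cell immediately to their left to use $g_n$, so that $\theta$ genuinely contains $g_n(f_n)^n$. This relies on the boundary behaviour of the window test in the definition of $\mathcal{R}_{\tau,f,g}$ — the conditions $a \neq n$, $b \neq 1$ pinning the $f$-block to length exactly $n$, the window at the flanking cells being too short to certify the test, and the latitude in the rest of $\beta$ to avoid spurious occurrences of $1\,\ldots\,n$ — and is the only place calling for a genuine case check; everything else is a direct appeal to Lemmas~\ref{gen}, \ref{preinj} and \ref{nsurj}.
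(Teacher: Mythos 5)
Your proposal is correct and follows essentially the same route as the paper: pre-injectivity via Lemma~\ref{gen} combined with Lemma~\ref{preinj}, and non-surjectivity by placing correct guesses $1,\ldots,n$ on the background track over the unique occurrence of $t_1\ldots t_n$ so that the induced action-track distribution contains $g_n(f_n)^n$, then invoking Lemma~\ref{nsurj} and the staticity of the background track. Your explicit choice of $\beta(p)=a\neq n$, $\beta(p+n+1)=b'\neq 1$ and a constant tail avoiding spurious runs is exactly the boundary bookkeeping the paper leaves implicit, so if anything your verification is slightly more careful than the published one.
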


\begin{proof}
Let $n \geq 1$ be such that there is a length $n$ pattern $t_1\ldots t_n$ in $\tau$ that only appears once. Let $\mathcal{R}' = \{f_n, g_n\}$. By Lemma \ref{preinj} we know that for all $\theta \in \mathcal{R}'^\Z$ such that a length $n$ block of rules $f_n$ appears at most once, $H_\theta$ is pre-injective. Let $\mathcal{R} = \mathcal{R}_{\tau,f_n,g_n}$ and $\alpha:T\rightarrow \mathcal{R}$ such that $\alpha(t) = h_t$ for all $t \in T$. Then by Lemma \ref{gen}, $H_{\tau_\alpha}$ is pre-injective.

Next, assume the pattern $t_1\ldots t_n$ is in the cells $[x+1,x+n]$, meaning $\tau (x+i) = t_i$ when $1\leq i \leq n$. Let then $c \in (\Z_n \times \Sigma)^\Z$ be such that $c(x+i) = (i, 0)$ when $1\leq i \leq n$ and $c(x) = (m,1)$ for some $m\in \Z_n$. Now rule $f_n$ is used in cells $x+1,\ldots,x+n$ and rule $g_n$ is used in $x$. By Lemma \ref{nsurj}, $c$ then has no pre-image and therefore $H_{\tau_\alpha}$ is not surjective.
\qed
\end{proof}

\begin{theorem}
Let $T$ be a set of rule templates and $\tau \in T^\Z$ be a non-recurrent rule distribution template. There exists a set of local rules $\mathcal{R}$ and assignment $\alpha: T \rightarrow \mathcal{R}$ such that $H_{\tau_\alpha}$ is surjective, but not pre-injective. 
\end{theorem}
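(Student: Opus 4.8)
The plan is to mirror the proof of the previous theorem, replacing the $f_n/g_n$ construction by the $\gamma_n/\delta_n$ one: surjectivity will follow from Lemma~\ref{surj} through the reduction Lemma~\ref{gen}, while the failure of pre-injectivity will come from an explicit pair of asymptotic configurations realizing the pattern $\delta_n(\gamma_n)^n\delta_n$ of Lemma~\ref{npreinj}. Fix $n$ so that $t_1 \dots t_n$ occurs in $\tau$ exactly once; such an $n$ exists because $\tau$ is non-recurrent, and we may take $n \geq 2$ (extending a unique pattern by one cell on one side yields another unique pattern). Regard $\gamma_n$ and $\delta_n$ as local rules with the symmetric neighbourhood $(-(n+1), \dots, n+1)$, obtained by adjoining dummy neighbours; this changes neither their global maps nor the validity of Lemmas~\ref{surj} and \ref{npreinj}. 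Put $\mathcal{R} = \mathcal{R}_{\tau, \gamma_n, \delta_n}$ and let $\alpha : T \to \mathcal{R}$ be the assignment $\alpha(t) = h_t$.

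Surjectivity is then immediate. By Lemma~\ref{surj}, for every $\theta \in \{\gamma_n, \delta_n\}^{\Z}$ in which at most one block of adjacent $\gamma_n$-cells has length $n$, the map $H_\theta$ is surjective; this is exactly the hypothesis required by Lemma~\ref{gen} with $f = \gamma_n$, $g = \delta_n$, $\mathcal{R}_1 = \{\gamma_n, \delta_n\}$ and $\mathcal{R}_2 = \mathcal{R}$. Hence Lemma~\ref{gen} yields that $H_{\tau_\alpha}$ is surjective.

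To see that $H_{\tau_\alpha}$ is not pre-injective, assume the unique occurrence of $t_1 \dots t_n$ lies in the cells $[x+1, x+n]$, so $\tau(x+i) = t_i$ for $1 \leq i \leq n$. Choose a background track $b \in \Z_n^{\Z}$ with $b(x+i) = i$ on $[x+1, x+n]$, with $b(x) \neq n$ and $b(x+n+1) \neq 1$, and with the remaining values picked so that no further run of $\gamma_n$-cells of length $\geq n$ is created; this is possible precisely because $h_{t_1} \dots h_{t_n}$ occurs only once in $\tau_\alpha$, as analysed in the proof of Lemma~\ref{gen}. With this background, every cell of $[x+1, x+n]$ applies $\gamma_n$, because its guess is correct and its window contains the length-$(n+2)$ sub-segment $(b(x), 1, \dots, n, b(x+n+1))$ whose first entry is $\neq n$ and whose last entry is $\neq 1$; and cells $x$ and $x+n+1$ apply $\delta_n$, because their windows are too narrow to contain such a sub-segment. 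Thus the rule distribution $\theta \in \{\gamma_n, \delta_n\}^{\Z}$ induced on the action track by $b$ carries the pattern $\delta_n(\gamma_n)^n\delta_n$, and Lemma~\ref{npreinj} supplies asymptotic configurations $c^{(2)} \neq e^{(2)}$ with $H_\theta(c^{(2)}) = H_\theta(e^{(2)})$. Let $c, e \in (\Z_n \times (\Z_2 \times \Z_2))^{\Z}$ be the configurations with common background track $b$ and action tracks $c^{(2)}$ and $e^{(2)}$ respectively. Then $c$ and $e$ are asymptotic and distinct, and $H_{\tau_\alpha}(c)$ and $H_{\tau_\alpha}(e)$ agree on the static background track and, by the defining property of $\mathcal{R}_{\tau, \gamma_n, \delta_n}$ used in Lemma~\ref{gen}, also on the action track. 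Therefore $H_{\tau_\alpha}(c) = H_{\tau_\alpha}(e)$, so $H_{\tau_\alpha}$ is not pre-injective.

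The only step that needs genuine care is the identification of the local rule applied at each of the cells $x, x+1, \dots, x+n+1$, that is, verifying the two-part condition defining $h_t$ (a correct background guess together with the presence, inside the cell's window, of a suitably flanked block $(a, 1, \dots, n, b)$ with $a \neq n$ and $b \neq 1$), and checking that $b$ can be completed on the rest of $\Z$ without manufacturing a second length-$n$ block of $\gamma_n$-cells. Both points are the one-dimensional bookkeeping already performed in the proof of Lemma~\ref{gen}, so no essentially new difficulty arises; everything else is a formal assembly of Lemmas~\ref{surj}, \ref{npreinj} and \ref{gen}.
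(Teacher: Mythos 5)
Your proof is correct and takes essentially the same approach as the paper: surjectivity via the Myhill-direction Lemma~\ref{surj} fed through the reduction Lemma~\ref{gen}, and non-pre-injectivity by choosing a background track whose induced rule distribution contains $\delta_n(\gamma_n)^n\delta_n$ and then lifting the asymptotic pair from Lemma~\ref{npreinj} (your lift is in fact spelled out more carefully than in the paper). One small slip in justification only: cells $x$ and $x+n+1$ apply $\delta_n$ not because their windows are ``too narrow'' (they have length $2n+1\geq n+2$), but because any qualifying sub-segment must place the cell's own guess inside the run $1\ldots n$, which your choices $b(x)\neq n$, $b(x+1)=1$, $b(x+n)=n$, $b(x+n+1)\neq 1$ rule out.
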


\begin{proof}
Let $n \geq 1$ be such that there is a length $n$ pattern $t_1\ldots t_n$ in $\tau$ that only appears once. Let $\mathcal{R}' = \{\gamma_n, \delta_n\}$. By Lemma \ref{surj} we know that for all $\theta \in \mathcal{R}'^\Z$ such that a length $n$ block of rules $\gamma_n$ appears at most once, $H_\theta$ is surjective. Let $\mathcal{R} = \mathcal{R}_{\tau,\gamma_n,\delta_n}$ and $\alpha:T\rightarrow \mathcal{R}$ such that $\alpha(t) = h_t$ for all $t \in T$. Then by Lemma \ref{gen}, $H_{\tau_\alpha}$ is surjective.

Next, assume the pattern $t_1\ldots t_n$ is in the cells $[x+1,x+n]$, meaning $\tau (x+i) = t_i$ when $1\leq i \leq n$. Let then $c \in (\Z_n \times \Sigma)^\Z$ be such that $c(x+i) = (i, 0)$ when $1\leq i \leq n$ and $c(x) = (m,1)$ for some $m\in \Z_n$. Now rule $\gamma_n$ is used in cells $x+1,\ldots,x+n$ and rule $\delta_n$ is used in $x$ and $x+n+1$. Then by Lemma \ref{npreinj} there are asymptotic differing configurations with the same image, meaning $H_{\tau_\alpha}$ is not pre-injective. 
\end{proof}

A similar reduction to the preceding can be achieved by using a technique found in \cite{salo}. Using this method, the assignment can be defined with a fixed number of states, not depending on the number of templates.

\section{Surjunctivity}

Surjunctivity is the property of a cellular automaton that if it's global update function is injective, it is surjective. Surjunctivity is implied by the Garden of Eden theorem, so it is known to hold for regular CA over integer grids. 

Furthermore, then we also know that surjunctivity holds for NUCA with a recurrent rule distribution. In this section we show a property of a rule distribution that implies surjunctivity, and show that every distribution asymptotic to a recurrent distribution has said property.

\begin{definition}
Let $k \in \Z$. The sets $(\infty,k)$ and $(k,\infty)$ is defined as 
\begin{align*}
(\infty, k) = \set{x \in \Z}{x < k}, \\
(k, \infty) = \set{x \in \Z}{x > k}.
\end{align*}
\end{definition}

\begin{lemma} \label{recur}

Let $\phi \in \mathcal{R}^\Z$ be a recurrent rule distribution. Let $i,j \in \Z$, $i \leq j$, and $u = \phi_{|(\infty,i)}$, $v = \phi_{|[i,j]}$, $w= \phi_{|(j,\infty)}$. Either every finite suffix of $u$ is a subword of $w$ or every finite prefix of $w$ is a subword of $u$.
\end{lemma}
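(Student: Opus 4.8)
The plan is to argue by contradiction. Suppose neither alternative holds: there is a finite suffix $u'$ of $u$ that does not occur in $w$, and a finite prefix $w'$ of $w$ that does not occur in $u$. By enlarging both, we may assume $u'$ and $w'$ have the same length $\ell$, and moreover that $u'$ is precisely the length-$\ell$ suffix of $u$ ending at position $i-1$, while $w'$ is the length-$\ell$ prefix of $w$ starting at position $j+1$. Now consider the finite pattern $P = \phi_{|[i-\ell, j+\ell]}$, which is the concatenation $u' \cdot v \cdot w'$ centered on the segment $[i,j]$. Since $\phi$ is recurrent, $P$ occurs at least twice in $\phi$, so there is a translate by some $\bar r \neq 0$ placing a copy of $P$ at position $[i-\ell+r,\, j+\ell+r]$.

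The key step is to derive a contradiction from the existence of this second occurrence, by a direction-of-shift case analysis on the sign of $r$. If $r > 0$, the copy of $P$ lies strictly to the right; in particular the copy of the sub-block $u'$ (the left part of $P$) now sits at positions $[i-\ell+r, i-1+r]$, all of which lie in $(j,\infty)$ once $r$ is large enough — but one must be careful, since $r$ could be small. The cleaner route: iterate recurrence. Because $\phi$ is recurrent, $P$ in fact has infinitely many occurrences (any recurrent pattern occurs infinitely often, since after finding a second copy one applies recurrence to an even larger pattern containing both, etc.), so we may choose an occurrence of $P$ shifted by $r$ with $|r|$ as large as we like. Taking $r$ large and positive, the left sub-block $u'$ of that shifted copy is entirely contained in $w = \phi_{|(j,\infty)}$, which exhibits $u'$ as a subword of $w$ — contradicting the choice of $u'$. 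Symmetrically, taking an occurrence with $r$ large and negative, the right sub-block $w'$ of that shifted copy lies entirely in $u = \phi_{|(\infty,i)}$, exhibiting $w'$ as a subword of $u$ — contradicting the choice of $w'$. Since every occurrence of $P$ other than the original is shifted by some $r \neq 0$, and infinitely many occurrences force $r$ to be unbounded in at least one direction, at least one of the two contradictions is reached.

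The main obstacle is handling small shifts $r$ (where a shifted copy of $P$ overlaps the original), and making precise the claim that recurrence yields occurrences with $|r|$ unbounded. For the latter: if $P$ occurred only at finitely many positions, let $Q = \phi_{|D}$ where $D$ is a segment large enough to contain all of them plus a margin; recurrence gives a second copy of $Q$, whose translate would have to contain a copy of $P$ outside the original finite list of positions, a contradiction. So the set of occurrence positions of $P$ is infinite, hence unbounded above or unbounded below (or both). If it is unbounded above, pick $r \to +\infty$ to get the first contradiction; if unbounded below, pick $r \to -\infty$ for the second. Either way the supposition fails, so one of the two stated alternatives must hold.
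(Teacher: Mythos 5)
Your proof is correct and takes essentially the same route as the paper's: both consider the concatenated pattern $u'vw'$ (the paper's $xvy$) around $[i,j]$ and use recurrence to obtain occurrences arbitrarily far to the right or left, which would place a copy of $u'$ inside $w$ or of $w'$ inside $u$, yielding the contradiction. The only difference is that you explicitly justify the step the paper leaves implicit, namely that recurrence forces infinitely many (hence unboundedly shifted) occurrences of the pattern.
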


\begin{proof}
Suppose $x$ is a suffix of $u$ and $y$ is a prefix of $w$ such that $x$ is not a subword of $w$ and $y$ is not a subword of $u$. Then the word $xvy$ cannot appear infinitely many times in $\phi$; if it did, either $x$ would be a subword of $w$ or $y$ a subword of $u$. Then $\phi$ isn't recurrent, which is a contradiction. Therefore either $x$ must be a subword of $w$ or $y$ must be a subword of $u$.
\qed
\end{proof}

We show that for any distribution $\theta$ with the property shown for recurrent distributions in Lemma \ref{recur}, the NUCA $H_\theta$ will be surjunctive. 

\begin{lemma} \label{surj}
Let $\theta \in \mathcal{R}^\Z$ be a rule distribution. Let $i,j\in \Z$ such that $i,j \in \Z$, $i \leq j$, $u = \theta_{|(\infty,i)}$, $w= \theta_{|(j,\infty)}$, and assume that either every finite suffix of $u$ is a subword of $w$ or every finite prefix of $w$ is a subword of $u$. Then if $H_\theta$ is injective, it is surjective. 
\end{lemma}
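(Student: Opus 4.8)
The plan is to prove the contrapositive: assuming $H_\theta$ is not surjective, I would produce two distinct configurations with the same image, so $H_\theta$ is not injective. Reflecting $\Z$ conjugates $H_\theta$ to the update rule of the reflected distribution and preserves both injectivity and surjectivity, while it interchanges the two alternatives in the hypothesis (reflection turns ``every suffix of $u$ is a subword of $w$'' into ``every prefix of the right tail is a subword of the left tail''); so I may assume that every finite prefix of $w$ is a subword of $u$. The crucial intermediate step is to find an orphan inside the right tail: a segment $S\subseteq(j,\infty)$ over which the partial update rule $H_{\theta|S}$ is not surjective.

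Granting such an $S$, the rest follows the Moore--Myhill counting of Lemma~\ref{rgoe1}. Fix $p\in\Sigma^S$ with no $H_{\theta|S}$-preimage, a state $q\in\Sigma$, and put $n=|S|$, $s=|\Sigma|$; let $r$ be a radius for $H_\theta$. Since $S\subseteq(j,\infty)$, the rule pattern $\theta_{|S}$ is a factor of some prefix of $w$; because arbitrarily long prefixes of $w$ occur in the left-infinite word $u$, and a longer prefix necessarily occurs further to the left, $\theta_{|S}$ occurs at infinitely many positions of $\theta$ tending to $-\infty$. Hence, for any $m$, I can fix a segment $C$ containing $m$ pairwise disjoint translates of $S$, each carrying a translated copy of $p$ that has no preimage (the partial rule over a translate of $S$ is a translate of $H_{\theta|S}$). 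Letting $C'$ be $C$ shrunk by $r$ at each end and $K=\{c\in\Sigma^\Z:\mathrm{supp}_q(c)\subseteq C'\}$, every configuration of $H_\theta(K)$ agrees with $H_\theta$ of the constant $q$ outside $C$ and avoids the translated copy of $p$ on each of the $m$ translates, so $|H_\theta(K)|\le(s^n-1)^m s^{|C|-mn}$, while $|K|=s^{|C|-2r}$. By Lemma~\ref{ineq2}, for $m$ large enough $|H_\theta(K)|<|K|$, so two distinct --- automatically asymptotic --- configurations of $K$ have the same image; hence $H_\theta$ is not pre-injective, and in particular not injective.

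The remaining task is to produce the orphan in $(j,\infty)$; equivalently, to show that if every partial update rule over a segment of $(j,\infty)$ is surjective, then $H_\theta$ is surjective. This is where injectivity is genuinely needed and is the main obstacle, since the finite middle block $\theta_{|[i,j]}$ and the non-recurring part of the left tail are not constrained by the hypothesis. My approach: first, for any fixed right half-configuration $e^{+}$ on $(j,\infty)$, use injectivity of $H_\theta$ to show that the induced half-line map $\Psi$ sending a left half-configuration $e^{-}$ on $(\infty,j]$ to $H_\theta(e^{-}e^{+})$ restricted to $(\infty,j+r]$ is injective --- two left halves with the same $\Psi$-image, glued to the common $e^{+}$, would give two full configurations with equal $H_\theta$-image. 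Second, upgrade $\Psi$ to a surjection onto $\Sigma^{(\infty,j+r]}$: to realize a target configuration $c'$, first choose $e^{+}$ realizing $c'$ far to the right, using surjectivity of the right-tail update rules together with a compactness argument as in Lemma~\ref{orphan}, and then solve for a suitable $e^{-}$ window by window, using the repeated far-left copies of long prefixes of $w$ inside $u$ --- on each of which the local dynamics is a translate of the surjective right-tail dynamics --- to carry the construction past the finitely many ``bad'' cells near the middle. Making this last step rigorous is the hard part of the proof.
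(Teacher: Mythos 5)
Your reduction to a counting argument is fine \emph{once} you have an orphan whose domain lies in the recurring part of the distribution: the observation that longer and longer prefixes of $w$ occur in $u$ with their left ends tending to $-\infty$, hence that $\theta_{|S}$ has infinitely many disjoint copies, and the subsequent Moore-style count \`a la Lemma~\ref{rgoe1} giving non-pre-injectivity (hence non-injectivity) are all correct. The genuine gap is exactly the step you flag yourself: producing an orphan over a segment $S\subseteq(j,\infty)$, i.e.\ showing that if every partial update rule over a segment of the right tail is surjective then $H_\theta$ is surjective. Lemma~\ref{orphan} only gives you an orphan over \emph{some} segment, which may sit over the non-recurring middle block $[i,j]$ or straddle it, and nothing in your sketch rules this out. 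The proposed repair via the half-line map $\Psi$ does not close the hole: injectivity of $\Psi$ on the infinite product $\Sigma^{(\infty,j]}$ gives no surjectivity for free (there is no Garden-of-Eden-type statement available for such a half-line map here --- indeed the whole point of the lemma is that such implications fail for general non-recurrent distributions), and the ``window by window'' construction carrying a preimage past the finitely many bad cells is precisely the assertion that needs proof. So as written the proof is incomplete at its only hard point.

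For comparison, the paper takes a route that bypasses orphan relocation entirely. Assuming (WLOG) that every finite suffix $u_n$ of $u$ recurs in $w$, it wraps the segment of $\theta$ from $i$ to the leftmost copy of $u_n$ right of $j$ into a \emph{finite circular} distribution $\psi_n$, the two copies of $u_n$ being identified so the seam is locally consistent with $\theta$. If $H_\theta$ is not surjective, then for large $n$ the fixed orphan pattern embeds faithfully into the circle, so $H_{\psi_n}$ is not surjective, and since $\Sigma^{\Z_{m_n}}$ is finite this immediately gives $c_n\neq e_n$ with equal $H_{\psi_n}$-images --- no counting and no control over where the orphan sits is needed. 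The witnesses are then unwrapped back to $\Z$: if the differences approach the middle block for infinitely many $n$, a compactness limit yields two distinct configurations of $\Sigma^\Z$ with the same image; if the differences stay away from the middle, a single pair can be embedded directly, the images agreeing because cells near the differences see exactly the circular data. Either way $H_\theta$ fails to be injective. If you want to salvage your own outline, the content you would have to supply is essentially equivalent to this wrapping argument (or some other mechanism converting global non-surjectivity into a failure witnessed inside the recurring tail), so I recommend adopting the circular construction rather than trying to make the $\Psi$-based half-line argument rigorous.
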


\begin{proof}
Assume $H_\theta$ is not surjective. Let $\Sigma$ be the state set of rules in $\mathcal{R}$ and assume that all rules are at most radius $r$. Assume that every suffix of $u$ is a subword of $w$.  The other case is identical. 

Let $u_n = \theta_{|[i-n,i-1]}$ be the length $n$ suffix of $\theta_{|(\infty,i)}$. For any $n$, let $m_n$ be the length of the segment from $i$ to the rightmost cell of the leftmost copy of $u_n$ to the right of $j$. Let then $\psi_n \in \mathcal{R}^{\Z_{m_n}}$ be such that $\psi_n (x \mod m_n) = \theta (x)$ for all $x \in [i, i+m_n-1]$. We can think of $\psi_n$ as the segment of $\theta$ ranging from $u_n$ to a copy of $u_n$, wrapped around in a circle with the two copies overlapping each other. This is illustrated in Figure \ref{wrap1}.

\begin{figure}
\begin{center}
\includegraphics[scale=0.8]{"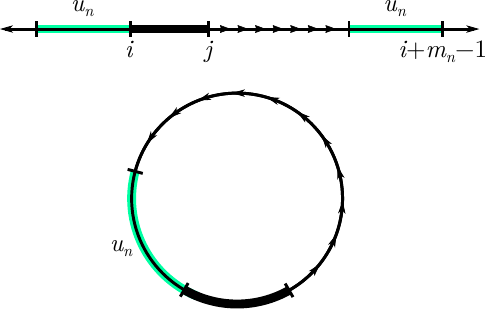"}
\caption{The wrapping of $\theta$ (above) to $\psi_n$ (below). The arrows indicate the direction of the wrapping.} \label{wrap1}
\end{center}
\end{figure}

Because $H_\theta$ is not surjective, for large enough $n$, the function $H_{\psi_n}$ is not surjective. Since $\Sigma^{\Z_{m_n}}$ is finite, $H_{\psi_n}$ is not injective. Let then $c_n,e_n \in \Sigma^{\Z_{m_n}}$ be configurations such that $c_n \neq e_n$ and $H_{\psi_n} (c_n) = H_{\psi_n} (e_n)$. Now there are two cases.

\emph{Case} 1: For infinitely many $n$, the $c_n$ and $e_n$ differ within $2r$  of the segment $[i,j]$, that is, $\mathrm{diff}(c_n, e_n) \cap [i-2r,j+2r] \neq \emptyset$. Let then $c_n', e_n' \in \Sigma^\Z$ be configurations such that $c_n' (x) = c_n(x \mod m_n)$ and $e_n' (x) = e_n(x \mod m_n)$ for all $x\in [i-n,i+m_n-1]$ and $c_n'(y) = e_n'(y)$ for all $y \notin [i-n,i+m_n-1]$ The configurations $c_n'$ and $e_n'$ can be thought of as $c_n$ and $e_n$ "unwrapped" and "embedded" into some configuration over $\Z$.

Let then $(c_{n_k}',e_{n_k}')_k$ be a sequence of pairs of such configurations. By compactness, this sequence has a converging subsequence with a limit $(c,e)$. Now $c \neq e$, because $\mathrm{diff}(c_{n_k}', e_{n_k}') \cap [i-2r,j+2r] \neq \emptyset$ for all $k$. In addition, because $H_{\psi_{n_k}} (c_{n_k}) = H_{\psi_{n_k}} (e_{n_k})$ for all $k$, for any finite domain $D \subset \Z$ there is $m \in \Z_+$ such that $H_\theta(c_{n_k}')(D)=H_\theta(e_{n_k}')(D)$ for all $k \geq m$. Therefore $H_\theta (c) = H_\theta (e)$, meaning $H_\theta$ is not injective.

\emph{Case} 2: For all large enough $n$, $c_n$ and $e_n$ are identical within $2r$ of $[i,j]$. Let $c,e\in \Sigma^\Z$ such that for some such $n$, $c(x) = c_n(x \mod m_n)$ and $c(x) = c_n(x \mod m_n)$ for all $x \in [j+1, i+m_n-1]$ and $c(y) = e(y)$ for all $y \notin [j+1, i+m_n-1]$. This is illustrated in Figure \ref{wrap2}.

\begin{figure}
\begin{center}
\includegraphics[scale=0.8]{"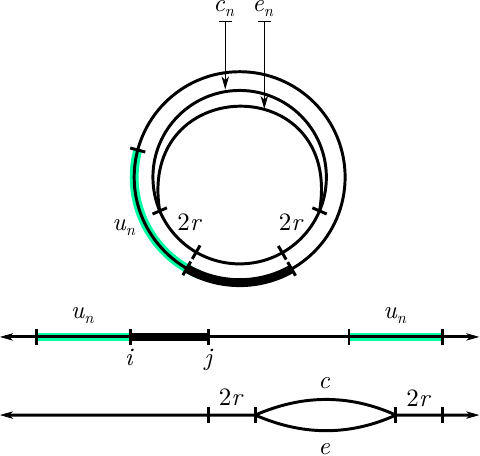"}
\caption{The unwrapping of $c_n$ and $e_n$ into $c$ and $e$.} \label{wrap2}
\end{center}
\end{figure}

Clearly $c \neq e$, because $c_n$ and $e_n$ differ somewhere in the segment $[j+1, i+m_n-1]$. For any cell $x$ that is at least $r$ cells away from $\mathrm{diff}(c,e)$, the neighbourhood of $x$ is identical in $c$ and $e$, hence $H_\theta (c)(x) = H_\theta(e)(x)$. For any cell $y$ that is within $r$ cells of $\mathrm{diff}(c,e)$, its neighbourhood is within $2r$ of $\mathrm{diff}(c,e)$. Then because $H_{\psi_n} (c_n) = H_{\psi_n} (e_n)$, $H_\theta (c)(y) = H_\theta(e)(y)$. Therefore $H_\theta (c) = H_\theta (e)$, meaning $H_\theta$ is not injective.

Hence in either case, $H_\theta$ is not injective. Therefore if $H_\theta$ is injective, it is surjective.
\qed
\end{proof}

\begin{theorem}
Let $\theta \in \mathcal{R}^\Z$ be asymptotic to a recurrent rule distribution $\phi \in \mathcal{R}^\Z$. If $H_\theta$ is injective, it is surjective.
\end{theorem}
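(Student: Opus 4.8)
The plan is to deduce the statement immediately from Lemma~\ref{recur} and Lemma~\ref{surj} by transporting the relevant combinatorial property from $\phi$ to $\theta$ across the finite region where they differ. First I would use that $\theta$ and $\phi$ are asymptotic: the set $\mathrm{diff}(\theta,\phi)$ is finite, so I can fix $i,j \in \Z$ with $i \leq j$ and $\mathrm{diff}(\theta,\phi) \subseteq [i,j]$ (e.g.\ the minimum and maximum of the difference set, or any $i=j$ if it is empty). Consequently $\theta(x) = \phi(x)$ for every $x < i$ and every $x > j$, so, writing $u = \theta_{|(\infty,i)}$ and $w = \theta_{|(j,\infty)}$, we have $u = \phi_{|(\infty,i)}$ and $w = \phi_{|(j,\infty)}$ as one-sided infinite words.

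Next I would invoke Lemma~\ref{recur} for the recurrent distribution $\phi$ with exactly this cut $[i,j]$. It yields that either every finite suffix of $\phi_{|(\infty,i)}$ is a subword of $\phi_{|(j,\infty)}$, or every finite prefix of $\phi_{|(j,\infty)}$ is a subword of $\phi_{|(\infty,i)}$. Since $\phi_{|(\infty,i)} = u$ and $\phi_{|(j,\infty)} = w$, this is verbatim the hypothesis of Lemma~\ref{surj} for the distribution $\theta$ at the cut $[i,j]$: either every finite suffix of $u$ is a subword of $w$, or every finite prefix of $w$ is a subword of $u$.

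Finally, applying Lemma~\ref{surj} to $\theta$ gives that if $H_\theta$ is injective then $H_\theta$ is surjective, which is the claim. I do not expect any real obstacle here; the only point requiring care is choosing the cut $[i,j]$ large enough to swallow all of $\mathrm{diff}(\theta,\phi)$, so that the semi-infinite restrictions of $\theta$ and $\phi$ coincide exactly and the subword condition can be carried over unchanged — and this is immediate from finiteness of the difference set.
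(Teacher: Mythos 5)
Your proposal is correct and follows essentially the same route as the paper: choose $i \leq j$ with $\mathrm{diff}(\theta,\phi) \subseteq [i,j]$, apply Lemma~\ref{recur} to the recurrent distribution $\phi$, observe that the semi-infinite words outside $[i,j]$ coincide for $\theta$ and $\phi$, and conclude via Lemma~\ref{surj}. No gaps; the transfer of the subword condition across the finite difference set is exactly the paper's argument.
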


\begin{proof}
Let $i,j\in \Z$, $i\leq j$ be such that $\mathrm{diff}(\theta, \phi) \subseteq [i,j]$ and let $u = \phi_{|(\infty,i)}$ and $w= \phi_{|(j,\infty)}$. By Lemma \ref{recur}, either every finite suffix of $u$ is a subword of $w$ or every finite prefix of $w$ is a subword of $u$. Then because $\phi$ and $\theta$ are identical outside of $[i,j]$, by Lemma \ref{surj}, if $H_\theta$ is injective, it is surjective.
\qed
\end{proof}

\section{Conclusions}

We find that the Garden of Eden theorem holds for NUCA if the local rule distribution is uniformly recurrent. In the 1-dimensional case we find that every assignment to a given rule template defines a NUCA that satisfies either direction of the Garden of Eden theorem, if and only if the template is recurrent. Finally we find that all rule distributions asymptotic to a recurrent distribution are surjunctive.

The Garden of Eden theorem for NUCA should still be examined in other groups. As for surjunctivity, we have shown a property of a template that guarantees surjunctivity, but know nothing about the converse. It may be useful to examine the complement of the underlying property which gives us surjunctivity, and see whether this guarantees the existence of non-surjunctive assignments.

\bibliographystyle{splncs04}
\bibliography{nonunifgoe}

%
%
%
%
%
%
%
%
%
%
%
\end{document}